\newtheoremstyle{mythm}{3pt}{3pt}{}{16pt}{\bfseries}{:}{.5em}{}
\theoremstyle{mythm}
\newtheorem{theorem}{Theorem}
\newtheorem{example}{Example}
\newtheorem{definition}{Definition}
\newtheorem{corollary}{Corollary}
\newtheorem{lemma}{Lemma}
\newtheorem{construction}{Construction}
\DeclareMathOperator{\lcm}{lcm}
\newcommand{\cA}{\mathcal{A}}
\newcommand{\cB}{\mathcal{B}}
\newcommand{\cR}{\mathcal{R}}
\newcommand{\cS}{\mathcal{S}}
\newcommand{\cU}{\mathcal{U}}
\newcommand{\cV}{\mathcal{V}}
\newcommand{\cW}{\mathcal{W}}
\newcommand{\Z}{\mathbb{Z}}
\newcommand{\F}{\mathbb{F}}
\pgfplotsset{compat=1.3}
\tikzstyle{help lines}=[black!20,dashed]
\definecolor{light_gray}{rgb}{0.6,0.6,0.6}
\definecolor{awgray}{rgb}{0.7,0.7,0.7}
\definecolor{awgray_dark}{rgb} {0.4,0.4,0.4}
\tikzset{
	>=stealth',
	mycircle/.style={circle, draw=gray, very thick, text width=.1em, minimum height=1.5em, text centered},
	mycircle_small/.style={circle,draw=awgray_dark,fill = awgray_dark, inner sep=0,minimum size=.6em},
	mycircle_small_black/.style={circle,draw=black,fill = black, inner sep=0,minimum size=.6em},
	mybox/.style={rectangle,rounded corners,draw=black, thick,text width=1em,minimum height=4em,minimum width=4em,text centered},
	mybox_small/.style={rectangle,rounded corners,draw=black, thick,text width=1em,minimum height=2em,minimum width=2em,text centered},
	mybox_vec/.style={rectangle,rounded corners,draw=black, thick,text width=1em,minimum height=0.7em, minimum width=4em,text centered},
	mybox_vec_short/.style={rectangle,rounded corners,draw=black, thick,text width=1em,minimum height=0.7em, minimum width=2em,text centered},
	pil/.style={->, thick, shorten <=2pt, shorten >=2pt,},
}
\begin{document}
\title{On optimal weak algebraic manipulation detection codes
and weighted external difference families
\author{Minfeng Shao and Ying Miao}
\thanks{M. Shao and Y. Miao are with the Graduate School of Systems and Information Engineering,
University of Tsukuba, Tennodai 1-1-1, Tsukuba 305-8573, Japan
(e-mail: minfengshao@gmail.com, miao@sk.tsukuba.ac.jp).
}
 }
\date{}
\maketitle

\vspace{0.1in}
\begin{abstract}
This paper provides a combinatorial characterization of weak algebraic manipulation detection (AMD)
codes via a kind of generalized external difference families called bounded
standard weighted external difference families (BSWEDFs).
By means of this characterization, we improve a known lower bound on the maximum
probability of successful tampering for the adversary's all possible strategies in weak AMD codes.
We clarify the relationship between weak AMD codes and BSWEDFs with various properties.
We also propose several explicit constructions for BSWEDFs, some of which can generate new optimal
weak AMD codes.
\end{abstract}
\begin{IEEEkeywords} Algebraic manipulation detection code, difference family, weighted external difference family.
\end{IEEEkeywords}

\section{Introduction}

Algebraic manipulation detection (AMD) codes were first introduced by
Cramer \textit{et al.} \cite{CDFPW} to convert linear secret sharing schemes into
robust secret sharing schemes and build nearly optimal robust fuzzy extractors.
For those cryptographic applications, AMD codes received much attention and
were further studied in \cite{AS,CFP,CPX}. Generally speaking, for AMD codes,
we consider two different settings: the
adversary has full knowledge of the source (the \textit{strong model}) and the adversary
has no knowledge about the source (the \textit{weak model}). In the viewpoint of combinatorics,
AMD codes were proved to be closely related with various kinds of external difference
families for both strong and weak models by Paterson and Stinson \cite{PS}. In the literature,
optimal AMD codes in the strong model and their corresponding generalized external
difference families received the most attention (see \cite{BJWZ,HP2018,JL,LNC,PS,MS,WYF,WYFF},
and the references therein), while relatively little was known about AMD codes
under the weak model.

In this paper, we focus on weak AMD codes. In \cite{PS},
Paterson and Stinson first derived a theoretic bound on the maximum probability
of successful tampering for weak AMD codes.
Very recently, Huczynska and Paterson \cite{HP} characterized
the optimal weak AMD codes with respect to the Paterson-Stinson bound
by weighted external difference families. Natural questions arise from
the Paterson-Stinson bound and the corresponding characterization are:
(i) Whether the Paterson-Stinson bound is always
tight; (ii) If not, what are the equivalent combinatorial structures for
those optimal weak AMD codes not having been characterized by the
characterization in \cite{HP}.

To answer these questions, in this paper, we further study
the relationship between weak AMD codes and weighted external
difference families. Firstly, we define a new type of weighted external
difference families which are proved equivalent with weak AMD codes.
By means of this combinatorial characterization of weak AMD codes:
(1) We improve the known lower bound on the maximum probability
of successful tampering for the adversary's all possible strategies; (2) We
derive a necessary condition for the Paterson-Stinson bound to
be achieved;
(3) We determine the exact combinatorial structure for a weak AMD code to be
optimal, when the Paterson-Stinson bound is not achievable.
In this way, some weak AMD codes which have not been identified to be
$R$-optimal previously now can be identified to be in fact $R$-optimal.
Secondly, we show the relationships between this new type of weighted external
difference families and other types of external difference families. Finally,
we exhibit several explicit constructions of optimal weighted external
difference families to generate optimal weak AMD codes.

This paper is organized as follows. In Section \ref{sec-preliminary}, we
introduces some preliminaries about AMD codes.
In Section \ref{sec-BSWEDF}, we investigate
the relationship between AMD codes and external
difference families. In Section \ref{sec-construction}, we describe several
explicit constructions for bounded standard weighted external difference families,
which are combinatorial equivalents of weak AMD codes.
Conclusion is drawn in Section \ref{sec-conclusion}.

\section{Preliminaries}\label{sec-preliminary}
In this section we describe some notation and definitions about AMD codes.

\begin{itemize}
\item Let $(G,+)$ be an Abelian group of order $n$ with identity $0$;
\item For a positive integer $n$, let $\mathbb{Z}_{n}$ be the residue class ring of integers modulo $n$;
\item For a multi-set $B$ and a positive integer $k$, let $k\boxtimes B$ denote the multi-set, where each
element of $B$ repeated $k$ times;
\item For a subset $B\subseteq G$, ${D(B)}$ denotes the multi-set
$\{a-b\in G: a, b\in B,\,a\ne b\}$;
\item For subsets $B_1,B_2\subseteq G$, $D(B_1,B_2)$ denotes the multi-set
$\{a-b\in G: a\in B_1, b\in B_2\}$;
\item For a multi-set $B$, let $\sharp(a,B)$ denote the number of times that $a$ appears in $B$;
\item For positive integers $k_1,k_2,\dots,k_m$, let $\text{lcm}(k_1,k_2,\dots,k_m)$ denote the least common multiple of
$k_1,k_2,\dots,k_m$.
\end{itemize}

Let $S$ be the source space, i.e., the set of plaintext messages with size $m$, and
$G$ be the encoded message space. An encoding function $E$
maps $s\in S$ to some $g\in G$.
Let $A_s\subseteq G$ denote the set of valid encodings of $s\in S$, where
$A_s\cap A_{s'}=\emptyset$ is required for any $s\ne s'$ so that any message
$g\in A_s$ can be correctly decoded as $D(g)=s$. Denote
$\mathcal{A}\triangleq\{A_s\,:\,s\in S\}$.

\begin{definition}[\cite{PS}]
For given $(S,G,\cA,E)$, let
 \begin{itemize}
  \item {The value $\Delta\in G\backslash \{0\}$ be chosen according to the adversary's strategy $\sigma$;}
  \item {The source message $s\in S$ be chosen uniformly at random by the encoder, i.e., we
  assume equiprobable sources;}
  \item {The message $s$ be encoded into $g\in A_s$ using the encoding function $E$; }
  \item {The adversary wins (a successful tampering) if and only if $g+\Delta\in A_{s'}$ with $s'\ne s$.}
  \end{itemize}
The  probability of successful tampering is denoted by $\rho_\sigma$ for strategy $\sigma$ of the adversary.
The code $(S,G,\mathcal{A},E)$ is called an $(n,m,a,\rho)$ \textit{algebraic manipulation detection code}
(or an $(n,m,a,\rho)$-AMD code for short) under the weak model,
where $a=\sum_{s\in S}|A_s|$ and $\rho$ denotes the maximum probability of successful tampering for all
possible strategies, i.e.,
\begin{equation*}
\rho=\max_{\sigma} \rho_{\sigma}.
\end{equation*}
Specially, if $E$ encodes $s$ to an element of $A_s$ uniformly, i.e.,
$Pr(E(s)=g)=\frac{1}{|A_s|}$ for any $s\in S$ and $g\in A_s$, then we
use $(S,G,\mathcal{A},E_u)$ to distinguish this kind of AMD codes under
the weak model, which were also termed as weak AMD codes in \cite{HP}.
\end{definition}

For weak AMD codes, the following Paterson-Stinson bound was derived in \cite{PS}.
\begin{lemma}[\cite{PS}]\label{lemma_R_optimal}
For any weak $(n,m,a,\rho)$-AMD code, the probability $\rho$ satisfies
\begin{equation*}
\rho\geq \frac{a(m-1)}{m(n-1)}.
\end{equation*}
\end{lemma}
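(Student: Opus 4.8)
The plan is to reduce to deterministic strategies and then run an averaging argument over all nonzero offsets $\Delta$. First I would note that any (possibly randomized) strategy $\sigma$ satisfies $\rho_\sigma=\sum_{\Delta\neq 0}\Pr{}_\sigma(\Delta)\,\rho_\Delta\le\max_{\Delta\neq 0}\rho_\Delta$, where $\rho_\Delta$ denotes the success probability when the adversary commits to the fixed offset $\Delta\in G\setminus\{0\}$; hence $\rho=\max_{\Delta\neq 0}\rho_\Delta$. Writing $a_s=|A_s|$ and using that the source is equiprobable ($\Pr(s)=1/m$) and that $E_u$ encodes uniformly ($\Pr(E(s)=g)=1/a_s$), together with the fact that a tampering with offset $\Delta$ succeeds exactly when $g+\Delta$ lands in the valid-encoding set of some other source, I would express
\begin{equation*}
\rho_\Delta=\frac{1}{m}\sum_{s\in S}\frac{1}{a_s}\bigl|\{g\in A_s: g+\Delta\in \textstyle\bigcup_{s'\neq s}A_{s'}\}\bigr|.
\end{equation*}

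Next I would bound the maximum by the average, $\rho\ge\frac{1}{n-1}\sum_{\Delta\neq 0}\rho_\Delta$ (since $|G\setminus\{0\}|=n-1$), and interchange the order of summation. The key combinatorial observation is that for each fixed pair $(s,g)$ with $g\in A_s$, as $\Delta$ ranges over $G\setminus\{0\}$ the element $g+\Delta$ ranges over $G\setminus\{g\}$; because the sets $A_{s'}$ are pairwise disjoint and $g\in A_s$, the union $\bigcup_{s'\neq s}A_{s'}$ omits $g$, so the number of offsets $\Delta\neq 0$ with $g+\Delta\in\bigcup_{s'\neq s}A_{s'}$ equals $\bigl|\bigcup_{s'\neq s}A_{s'}\bigr|=a-a_s$. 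Summing over the $a_s$ choices of $g\in A_s$ then contributes $a_s(a-a_s)$, and the crucial cancellation is that the uniform-encoding weight $1/a_s$ annihilates the factor $a_s$, leaving just $a-a_s$.

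Assembling the pieces, I would obtain
\begin{equation*}
\sum_{\Delta\neq 0}\rho_\Delta=\frac{1}{m}\sum_{s\in S}(a-a_s)=\frac{1}{m}\bigl(ma-a\bigr)=\frac{a(m-1)}{m},
\end{equation*}
using $\sum_{s\in S}a_s=a$ and $|S|=m$. Dividing by $n-1$ yields $\rho\ge\frac{a(m-1)}{m(n-1)}$, as claimed. The only real subtlety, and the step I would verify carefully, is the bookkeeping with the non-uniform sizes $a_s$: one must confirm that the weights $1/a_s$ cancel exactly against the $a_s$ arising from summing over $g\in A_s$, so that the final bound is clean and independent of the individual $a_s$, and that reducing from an arbitrary adversarial strategy to a single best offset $\Delta$ is legitimate (which follows since $\rho_\sigma$ is a convex combination of the $\rho_\Delta$).
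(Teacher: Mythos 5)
Your proof is correct. Note that the paper does not actually prove this lemma itself: it is quoted from Paterson--Stinson \cite{PS}. What the paper does instead is reprove (and refine) the bound through a combinatorial detour: Theorem~\ref{theorem_AMD_BSWEDF} translates a weak AMD code into a BSWEDF, Lemma~\ref{lemma_bound_BSWEDF} counts the total multiplicity of the weighted external differences, $\left|\bigcup_{i\ne j}D(B_i,\widetilde{B}_j)\right|=\widetilde{k}a(m-1)$, and dividing by the $n-1$ nonzero group elements gives $\lambda\geq\lceil \widetilde{k}a(m-1)/(n-1)\rceil$, whence $\rho=\lambda/(\widetilde{k}m)\geq a(m-1)/(m(n-1))$. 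Your averaging argument is the probabilistic shadow of exactly this counting: your observation that, for fixed $g\in A_s$, the offsets $\Delta\neq 0$ carrying $g$ into $\bigcup_{s'\neq s}A_{s'}$ number exactly $a-a_s$ is the same identity as the paper's difference count, and your cancellation of the weights $1/a_s$ against the $a_s$ choices of $g$ plays the role of the standard weighting $\widetilde{k}/k_i$ in Definition~\ref{def_BSWEDF}. What your route buys is a short, self-contained, elementary proof that never introduces the BSWEDF formalism; what the paper's route buys is strictly more information, since keeping the integrality of $\lambda$ yields the ceiling in Corollary~\ref{corollary_improved_bound}, which improves the bound whenever $(n-1)\nmid\widetilde{k}a(m-1)$ --- an improvement your averaging argument, which discards integrality when passing from the maximum to the mean, cannot see.
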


\begin{definition}[\cite{PS}]\label{def_R_op_PS}
A weak AMD code that meets the bound of Lemma \ref{lemma_R_optimal} with
equality is said to be $R$\textit{-optimal} with respect to the bound in
Lemma \ref{lemma_R_optimal}, where $R$ is used to indicate that random
choosing $\Delta$ is an optimal strategy for the adversary.
\end{definition}

\section{Algebraic manipulation detection codes and external difference families}\label{sec-BSWEDF}
In this section, we study the relationship between algebraic manipulation
detection codes and external difference families. Before doing this, we first introduce
some notation and definitions about difference families and their generalizations.

\begin{definition}[\cite{CD}]
Let ${\cB} = \{B_i: 1\le i \le m\}$ be a family of subsets of $G$.
Then ${\cB}$ is called a {\it difference family} (DF) if each nonzero element of $G$
appears exactly $\lambda$ times in the multi-set ${\bigcup}_{1\leq i\leq m}D(B_i)$.
Let $K=(|B_1|,|B_2|,\dots,|B_m|)$. One briefly says that ${\cB}$ is an $(n,K,\lambda)$-DF.
\end{definition}
When $m=1$ the set $B_1$ is also called an $(n,k=|B_1|,\lambda)$ \textit{difference set}.
If $\cB$ forms a partition of $G$,
then ${\cB}$ is called a {\it partitioned difference family} (PDF) \cite{D2009}
and denoted as an $(n,K,\lambda)$-PDF.

\begin{definition}[\cite{PS}]\label{def_nonuniform}
Let ${\cB} = \{B_i: 1\le i \le m\}$ be a family of disjoint subsets of $G$.
Then ${\cB}$ forms an {\it external difference family} (EDF) if each nonzero element of $G$
appears exactly $\lambda$ times in the union of multi-sets $D(B_i,B_j)$ for
$1\leq i\ne j\leq m$,
i.e.,
\begin{equation*}
\bigcup_{1\leq i\ne j\leq m}D(B_i,B_j)=\lambda \boxtimes (G\backslash \{0\}).
\end{equation*}
We briefly denote ${\cB}$ as an $(n,m,K,\lambda)$-EDF, where $K=(|B_1|,|B_2|,\dots,|B_m|)$.
An EDF is \textit{regular} if $|B_1|=|B_2|=\dots=|B_m|=k$, denoted as an $(n,m,k,\lambda)$-EDF,
which is also named as a perfect difference system of sets (refer to \cite{L,FT,FG} for instances).
\end{definition}

\begin{definition}[\cite{PS}]\label{def_nonuniform}
Let ${\cB} = \{B_i: 1\le i \le m\}$ be a family of disjoint subsets of $G$.
Then ${\cB}$ is a {\it bounded external difference family} (BEDF) if each nonzero element of $G$
appears at most $\lambda$ times in the union of multi-sets $D(B_i,B_j)$ for
$1\leq i\ne j\leq m$, i.e.,
\begin{equation*}
\bigcup_{1\leq i\ne j\leq m}D(B_i,B_j)\subseteq\lambda \boxtimes (G\backslash \{0\}).
\end{equation*}
We briefly denote ${\cB}$ as an $(n,m,K,\lambda)$-BEDF, where $K=(|B_1|,|B_2|,\dots,|B_m|)$.
\end{definition}

To construct AMD codes, in \cite{PS}, the following generalizations
of EDF were also introduced.

\begin{definition}[\cite{PS}]\label{def_nonuniform_GSEDF}
Let ${\cB} = \{B_i: 1\le i \le m\}$ be a family of disjoint  subsets of $G$.
${\cB}$ is called an $(n,m;k_1,k_2,\cdots,k_m; $ $\lambda_1,\lambda_2,\cdots,\lambda_m)$-{\it
generalized strong external difference family} (GSEDF) if for any given $1\leq i\leq m$,
each nonzero element of $G$ appears exactly $\lambda_i$  times in the union
of multi-sets $D(B_i,B_j)$ for $1\leq j\ne i\leq m$, i.e.,
\begin{equation}\label{eqn_GSEDF}
\bigcup_{\{j:1\leq j\leq m,\,j\ne i\}}D(B_i,B_j)=\lambda_i \boxtimes (G\backslash \{0\}),
\end{equation}
where $k_i=|B_i|$ for $1\leq i\leq m$.
\end{definition}

\begin{definition}[\cite{PS}]\label{def_nonuniform_GSEDF}
Let ${\cB} = \{B_i: 1\le i \le m\}$ be a family of disjoint  subsets of $G$.
Then ${\cB}$ forms an $(n,m;k_1,k_2,\cdots,k_m;$ $\lambda_1,\lambda_2,\cdots,\lambda_m)$-{\it
bounded generalized strong external difference family} (BGSEDF) if for any given $1\leq i\leq m$,
each nonzero element of $G$ appears at most $\lambda_i$  times in the union
of multi-sets $D(B_i,B_j)$ for $1\leq j\ne i\leq m$, i.e.,
\begin{equation}\label{eqn_BGSEDF}
\bigcup_{\{j:1\leq j\leq m,\,j\ne i\}}D(B_i,B_j)\subseteq\lambda_i \boxtimes (G\backslash \{0\}),
\end{equation}
where $k_i=|B_i|$ for $1\leq i\leq m$.
\end{definition}

\begin{definition}[\cite{PS}]\label{def_nonuniform_PEDF}
Let ${\cB} = \{B_i: 1\le i \le m\}$ be a family of disjoint  subsets of $G$.
Then ${\cB}$ is an $(n,m;c_1,c_2,\cdots,c_l;w_1,$ $w_2,\cdots,w_l;\lambda_1,\lambda_2,
\cdots,\lambda_l)$-{\it partitioned external difference family} (PEDF) if for any given $1\leq t\leq l$,
\begin{equation}\label{eqn_PEDF}
\bigcup_{\{i\,:\,|B_i|=w_t\}}\bigcup_{\{j:1\leq j\leq m,\,j\ne i\}}D(B_i,B_j)=\lambda_t \boxtimes (G\backslash \{0\}),
\end{equation}
where $c_t=|\{i\,:\,|B_i|=w_t,\, 1\leq i\leq m\}|$ for $1\leq t\leq l$.
\end{definition}

To characterize weak AMD codes, we further generalize  external difference families
to  weighted external differences families.

\begin{definition}\label{def_BSWEDF}
Let ${\cB} = \{B_i: 1 \le i \le m\}$ be a family of disjoint subsets of $G$.
Let $K=(k_1,k_2,\dots,k_m)$ with $k_i=|B_i|$ for $1\leq i\leq m$ and
$\widetilde{k}=\text{lcm}(k_1,k_2,\cdots,k_m)$. Define
$\widetilde{\cB}=\{\widetilde{B}_i: B_i\in \cB\}$ as the {\it standard
weighted multi-sets} of $\cB$, where
\begin{equation*}
\widetilde{B}_i\triangleq\frac{\widetilde{k}}{|B_i|}\boxtimes B_i=\frac{\widetilde{k}}{k_i}\boxtimes B_i.
\end{equation*}
Then ${\cB}$ is called an $(n,m,K,a,\lambda)$-{\it bounded standard weighted
external difference family} (BSWEDF) if $\lambda$
is the smallest positive integer such that
\begin{equation*}
\bigcup\limits_{1\leq i\ne j\leq m}D(B_i,\widetilde{B}_j)\subseteq \lambda \boxtimes (G\backslash \{0\}),
\end{equation*}
where $a=\sum_{1\leq i\leq m}k_i$. Furthermore, if
$\cB $ satisfies
\begin{equation*}
\bigcup\limits_{1\leq i\ne j\leq m}D(B_i,\widetilde{B}_j)=\lambda \boxtimes (G\backslash \{0\}),
\end{equation*}
then it is named as a {\it standard weighted external difference family}, also denoted as
an $(n,m,K,a,\lambda)$-SWEDF for short.
\end{definition}

For BSWEDFs and SWEDFs, we have the following facts on their parameters.
\begin{lemma}\label{lemma_bound_BSWEDF}
Let $\cB$ be an $(n,m,K,a,\lambda)$-BSWEDF. Then we have
\begin{equation}\label{eqn_Bound_lambda}
\lambda\geq \left\lceil\frac{\widetilde{k}a(m-1)}{n-1}\right\rceil.
\end{equation}
Specially, if $\cB$ is an $(n,m,K,a,\lambda)$-SWEDF, then
$(n-1)\mid (\widetilde{k}a(m-1))$ and
\begin{equation*}
\lambda=\frac{\widetilde{k}a(m-1)}{n-1}.
\end{equation*}
\end{lemma}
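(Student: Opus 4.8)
The plan is to derive both statements from a single double count of the multi-set $M\triangleq\bigcup_{1\le i\ne j\le m}D(B_i,\widetilde B_j)$, comparing its total size (with multiplicity) against the number $n-1$ of nonzero elements of $G$. The first observation I would record is that the normalization built into Definition \ref{def_BSWEDF} makes every standard weighted multi-set have the \emph{same} total size: since $\widetilde B_j=\frac{\widetilde k}{k_j}\boxtimes B_j$, we get $|\widetilde B_j|=\frac{\widetilde k}{k_j}\cdot k_j=\widetilde k$ for every $j$, independently of $j$. It is precisely this uniformity that makes the count collapse. I would also note at the outset that, by definition, $\lambda$ equals the maximum multiplicity $\max_{g\ne 0}\sharp\!\left(g,M\right)$ of a nonzero element in $M$, so it suffices to compare this maximum with the \emph{average} multiplicity.

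Next I would compute $|M|$ with multiplicity. For each ordered pair $(i,j)$ with $i\ne j$, the multi-set $D(B_i,\widetilde B_j)$ has $|B_i|\cdot|\widetilde B_j|=k_i\widetilde k$ elements, so summing over all such ordered pairs gives
\begin{equation*}
|M|=\sum_{1\le i\ne j\le m}k_i\widetilde k=\widetilde k\sum_{i=1}^{m}(m-1)k_i=\widetilde k\,(m-1)\sum_{i=1}^{m}k_i=\widetilde k\,a\,(m-1).
\end{equation*}
I would then check that $M$ contains no zero entry: every element of $M$ has the form $b-c$ with $b\in B_i$ and $c\in B_j$ (the underlying set of $\widetilde B_j$) for some $i\ne j$, and since $\cB$ is a family of \emph{disjoint} subsets, $b\ne c$, hence $b-c\ne 0$. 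Therefore $M$ is a multi-set supported on the $n-1$ nonzero elements of $G$.

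Invoking the BSWEDF condition $M\subseteq\lambda\boxtimes(G\setminus\{0\})$, every nonzero element occurs at most $\lambda$ times, so $|M|\le\lambda(n-1)$; combined with the count above this yields $\widetilde k\,a\,(m-1)\le\lambda(n-1)$, i.e.\ $\lambda\ge\widetilde k\,a\,(m-1)/(n-1)$, and taking ceilings (as $\lambda\in\Z$) gives \eqref{eqn_Bound_lambda}. For the SWEDF case the containment is an equality, so every nonzero element occurs \emph{exactly} $\lambda$ times and $|M|=\lambda(n-1)$ precisely; the size count then forces $\widetilde k\,a\,(m-1)=\lambda(n-1)$, which simultaneously gives the divisibility $(n-1)\mid\widetilde k\,a\,(m-1)$ and the exact value $\lambda=\widetilde k\,a\,(m-1)/(n-1)$. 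I do not anticipate a serious obstacle here; the only points requiring care are the uniform-size identity $|\widetilde B_j|=\widetilde k$ and the verification that disjointness excludes zero differences, both of which are immediate from the definitions.
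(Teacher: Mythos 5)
Your proposal is correct and follows essentially the same double-counting argument as the paper: both compute the total size $\widetilde{k}a(m-1)$ of the multi-set $\bigcup_{1\leq i\ne j\leq m}D(B_i,\widetilde{B}_j)$ and compare it with the bound $\lambda(n-1)$ on a multi-set supported on the $n-1$ nonzero elements of $G$, the equality case giving the SWEDF divisibility condition. Your explicit check that disjointness excludes zero differences is a small point the paper leaves implicit, but it is the same proof.
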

\begin{proof}
Let $\cB=\{B_i\,:\,1\leq i\leq m\}$.
The fact
\begin{equation*}
\bigcup\limits_{1\leq i\ne j\leq m}D(B_i,\widetilde{B}_j)=\bigcup\limits_{1\leq i\ne j\leq m}\bigcup_{b\in B_i}D(\{b\},\widetilde{B}_j)
\end{equation*}
means that
\begin{equation}\label{eqn_total_dif}
\left|\bigcup\limits_{1\leq i\ne j\leq m}D(B_i,\widetilde{B}_j)\right|=\sum\limits_{1\leq i\leq m}\sum\limits_{1\leq j\leq m\atop j\ne i}\sum_{b\in B_i}|D(\{b\},\widetilde{B}_j)|
=\sum\limits_{1\leq i\leq m}\sum\limits_{1\leq j\leq m\atop j\ne i}\sum_{b\in B_i}\widetilde{k}=\widetilde{k}a(m-1).
\end{equation}
Thus, we have
$\lambda \geq \lceil\frac{\widetilde{k}a(m-1)}{n-1}\rceil$.

Similarly, for the case of SWEDFs,
by Definition \ref{def_BSWEDF} and \eqref{eqn_total_dif}, we have $\lambda(n-1)=\widetilde{k}a(m-1)$,
i.e., $\lambda=\frac{\widetilde{k}a(m-1)}{n-1}$, which also means $(n-1)\mid(\widetilde{k}a(m-1))$.
\end{proof}

\begin{definition}
An $(n,m,K,a,\lambda)$-BSWEDF is said to be \textit{optimal}
if $\lambda$ takes the smallest possible value for
given $n$, $m$, and $K$.

\end{definition}

Specially, an $(n,m,K,a,\lambda)$-BSWEDF
is optimal if $\lambda$ achieves the lower bound given by
\eqref{eqn_Bound_lambda} with equality, i.e.,
$\lambda=\lceil\frac{\widetilde{k}a(m-1)}{n-1}\rceil$.

For $\Delta\in G\backslash \{0\}$, let $\rho_\Delta$ denote
the probability  that the adversary wins by modifying $g\in A_s$ into $g+\Delta\in A_{s'}$ for some $s'\neq s$. Thus, we have
$\rho=\max\{\rho_\Delta:\Delta\in G\backslash \{0\}\}$.
\begin{theorem}\label{theorem_AMD_BSWEDF}
There exists a weak $(n,m,a,\rho)$-AMD code $(S,G,\cA,E_u)$ if and only
if there exists
an $(n,m,K,a,\lambda)$-BSWEDF, where $|G|=n$, $a=\sum_{1\leq i\leq m}|A_{s_i}|$,
$K=(|A_{s_1}|,|A_{s_2}|,\cdots, |A_{s_m}|)$, $s_i\in S$, and
$\rho=\frac{\lambda}{\widetilde{k}m}$.
\end{theorem}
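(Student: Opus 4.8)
The plan is to reduce both directions of the equivalence to a single pointwise identity: for every fixed $\Delta\in G\backslash\{0\}$,
\begin{equation*}
\rho_\Delta=\frac{1}{\widetilde{k}m}\,\sharp\!\left(\Delta,\ \bigcup_{1\leq i\ne j\leq m}D(B_i,\widetilde{B}_j)\right),
\end{equation*}
where $B_i=A_{s_i}$. Once this is established, the theorem follows because passing between a family of disjoint subsets $\{B_i\}$ of $G$ and a weak AMD ensemble is merely a relabelling: in the forward direction one sets $B_i=A_{s_i}$, and in the backward direction one takes $A_{s_i}=B_i$ with $E_u$ the uniform encoder. Both objects carry exactly the same data (disjoint blocks plus their sizes $K$), so the whole content of the statement lives in the displayed identity together with the observation that $\lambda$ and $\rho$ are both defined as the extremal quantities $\max_{\Delta}$ of the two sides.

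First I would expand $\rho_\Delta$ directly from the definition. Under equiprobable sources and the uniform encoder, $\rho_\Delta=\frac{1}{m}\sum_{i=1}^{m}\frac{1}{k_i}N_i(\Delta)$, where $N_i(\Delta)$ counts the $g\in B_i$ with $g+\Delta\in\bigcup_{j\ne i}B_j$. Since the blocks are disjoint, each such $g$ lands in at most one other block, so $N_i(\Delta)=\sum_{j\ne i}\sharp(\Delta,D(B_j,B_i))$, reading $g+\Delta\in B_j$ as the difference equation $c-b=\Delta$ with $b\in B_i$, $c\in B_j$. This gives $\rho_\Delta=\frac{1}{m}\sum_{i}\frac{1}{k_i}\sum_{j\ne i}\sharp(\Delta,D(B_j,B_i))$. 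On the combinatorial side, because $\widetilde{B}_j=\frac{\widetilde{k}}{k_j}\boxtimes B_j$, one has $\sharp(\Delta,D(B_i,\widetilde{B}_j))=\frac{\widetilde{k}}{k_j}\,\sharp(\Delta,D(B_i,B_j))$. Summing over ordered pairs $i\ne j$ and then renaming the two summation indices to align the weight $\widetilde{k}/k_j$ with the matching block yields $\sharp(\Delta,\bigcup_{i\ne j}D(B_i,\widetilde{B}_j))=\widetilde{k}\sum_i\frac{1}{k_i}\sum_{j\ne i}\sharp(\Delta,D(B_j,B_i))=\widetilde{k}m\,\rho_\Delta$, which is the desired identity.

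To finish, I would take the maximum over $\Delta\in G\backslash\{0\}$. Disjointness forces $0\notin D(B_i,B_j)$ for $i\ne j$, so the multi-set $\bigcup_{i\ne j}D(B_i,\widetilde{B}_j)$ is supported on $G\backslash\{0\}$; hence the smallest $\lambda$ with $\bigcup_{i\ne j}D(B_i,\widetilde{B}_j)\subseteq\lambda\boxtimes(G\backslash\{0\})$ is exactly $\max_{\Delta\ne 0}\sharp(\Delta,\cdot)$, i.e. $\lambda$ of Definition \ref{def_BSWEDF}. Combined with $\rho=\max_\Delta\rho_\Delta$, this gives $\rho=\lambda/(\widetilde{k}m)$. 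I expect the main obstacle to be bookkeeping rather than conceptual: one must correctly match the probabilistic weight $1/k_i$ coming from the uniform encoding of block $B_i$ against the standard weight $\widetilde{k}/k_j$, being careful that reversing the order of the two blocks in $D(\cdot,\cdot)$ interchanges which size appears in the denominator. The index renaming that absorbs the $1/k_i$ factor into the correctly weighted block is the delicate step, and verifying the definitional point that the smallest admissible $\lambda$ equals the maximum multiplicity (which relies on $0$ being excluded) is what makes the two extremal quantities line up.
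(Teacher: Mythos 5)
Your proposal is correct and follows essentially the same route as the paper's proof: the paper's argument in both directions reduces to exactly your pointwise identity $\widetilde{k}m\,\rho_\Delta=\sharp\bigl(\Delta,\bigcup_{1\leq i\ne j\leq m}D(B_i,\widetilde{B}_j)\bigr)$, obtained by the same conversion of success probabilities into difference multiplicities and the same weight-matching via $\widetilde{B}_j=\frac{\widetilde{k}}{k_j}\boxtimes B_j$. The only difference is organizational: you isolate the identity once and take maxima over $\Delta$ to get both directions, whereas the paper carries out the computation separately in each direction with inequalities.
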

\begin{proof}
If $(S,G,\cA,E_u)$ is a weak $(n,m,a,\rho)$-AMD code, then for any
$\Delta\in G\backslash\{0\}$, we have
\begin{equation*}
\rho_\Delta\leq \rho=\frac{\lambda}{\widetilde{k}m},
\end{equation*}
that is,
\begin{equation}\label{eqn_rho_delta_stand}
\begin{split}
\frac{\lambda}{\widetilde{k}m}\geq \rho_\Delta
=&\sum_{s\in S}Pr(s)\sum_{g\in A_s}Pr(E_u(s)=g)\left(\sum_{s'\ne s,s'\in S}Pr(g+\Delta\in A_{s'})\right)\\
=&\sum_{s\in S}\frac{1}{m}\sum_{g\in A_s}\frac{1}{|A_s|}\left(\sum_{s'\ne s,s'\in S}Pr(g+\Delta\in A_{s'})\right)\\
=&\sum_{s\in S}\frac{1}{m}\frac{1}{|A_s|}\left(\sum_{s'\ne s,s'\in S}\sum_{g\in A_s}Pr(g+\Delta\in A_{s'})\right),
\end{split}
\end{equation}
where the second equality holds by the fact that $E_u$ encodes $s$ to elements of $A_s$ with uniform probability.
Note that for given $\Delta$, $s$, $g\in A_s$ and $s'\ne s$,
\begin{equation*}
Pr(g+\Delta\in A_{s'})=
\begin{cases}
1,\,\,&\Delta\in D(A_{s'},\{g\}),\\
0,\,\,&\Delta\not\in D(A_{s'},\{g\}).\\
\end{cases}
\end{equation*}

Thus, Inequality \eqref{eqn_rho_delta_stand} implies that
\begin{equation}\label{eqn_rho_delta}
\begin{split}
\frac{\lambda}{m}\geq\widetilde{k}\rho_\Delta=&\sum_{s\in S}\frac{1}{m}\frac{\widetilde{k}}{|A_s|}\left(\sum_{s'\ne s,s'\in S}\sum_{g\in A_s}Pr(g+\Delta\in A_{s'})\right)\\
=&\sum_{s\in S}\frac{1}{m}\frac{\widetilde{k}}{|A_s|}\left(\sum_{s'\ne s,s'\in S}\sharp\left(\Delta, D(A_{s'},A_s)\right)\right)\\
=&\sum_{s\in S}\frac{1}{m}\left(\sum_{s'\ne s,s'\in S}\frac{\widetilde{k}}{|A_s|}\sharp\left(\Delta, D(A_{s'},A_s)\right)\right)\\
=&\sum_{s\in S}\frac{1}{m}\left(\sum_{s'\ne s,s'\in S}\sharp\left(\Delta, D(A_{s'},\widetilde{A}_s)\right)\right)\\
=&\frac{1}{m}\sharp\left(\Delta, \bigcup_{s,s'\in S,\atop s'\ne s} D(A_{s'},\widetilde{A}_s)\right),\\
\end{split}
\end{equation}
where $\sharp(\Delta, B)$ denotes the number of times that $\Delta$ appears in the multi-set $B$.
This means that any $\Delta\in G\backslash \{0\}$ appears at most
$\lambda$ times in the multi-set
$\bigcup_{s,s'\in S,\atop s'\ne s} D(A_{s'},\widetilde{A}_s)$,
i.e., $$\bigcup_{s,s'\in S,\atop s'\ne s} D(A_{s'},\widetilde{A}_s)\subseteq  \lambda\boxtimes(G\backslash\{0\}).$$
Note that $\rho=\max\{\rho_\Delta\,:\,\Delta\in G\backslash\{0\}\}$ means
there exists at least one $\Delta\in G\backslash \{0\}$ such that the equality
in \eqref{eqn_rho_delta} holds.
Then $\{A_s\,:\,s\in S\}$ forms an $(n,m,(|A_{s_1}|,|A_{s_2}|,\cdots,|A_{s_m}|),a,\lambda)$-BSWEDF
by Definition \ref{def_BSWEDF}.

Conversely, suppose that there exists an $(n,m,K,a,\lambda)$-BSWEDF $\cB=\{B_i\,:\,1\leq i\leq m\}$ over $G$. Let
$S=\{s_i\,:\,1\leq i\leq m\}$ and $A_{s_i}=B_i$ for $1\leq i\leq m$. Then we can define
a weak AMD code, where $E_u(s_i)=g\in B_i$ with equiprobability. For any $\Delta\in G\backslash\{0\}$,
similarly as \eqref{eqn_rho_delta_stand}, we have
\begin{equation*}
\begin{split}
\rho_\Delta=&\sum_{s\in S}\frac{1}{m}\frac{1}{|A_s|}\left(\sum_{s'\ne s,s'\in S}\sum_{g\in A_s}Pr(g+\Delta\in A_{s'})\right)\\
=&\sum_{1\leq i\leq m}\frac{1}{m}\frac{1}{|B_i|}\left(\sum_{1\leq j\leq m\atop j\ne i}\sharp(\Delta,D(B_j,B_i))\right)\\
=&\sum_{1\leq i\leq m}\frac{1}{\widetilde{k}m}\left(\sum_{1\leq j\leq m\atop j\ne i}\sharp(\Delta,D(B_j,\widetilde{B}_i))\right)\\
=&\frac{1}{\widetilde{k}m}\left(\sum_{1\leq j\ne i\leq m}\sharp(\Delta,D(B_j,\widetilde{B}_i))\right)\\
\leq &\frac{\lambda}{\widetilde{k}m},
\end{split}
\end{equation*}
where the last inequality holds by the fact that $\cB$ is an $(n,m,K,a,\lambda)$-BSWEDF.
According to Definition \ref{def_BSWEDF}, the equality is achieved for at least one
$\Delta\in G\backslash\{0\}$ in the preceding inequality.
Thus, the weak $(n,m,a,\rho)$-AMD code
defined based on the BSWEDF $\cB$ satisfies $$\rho=\max\{\rho_\Delta\,:\, \Delta\in
G\backslash\{0\}\}= \frac{\lambda}{\widetilde{k}m},$$ which completes the proof.
\end{proof}


When we consider the optimality of BSWEDF, the size-distribution $K=(k_1, k_2,\dots,k_m)$
is given. However, the $R$-optimality of weak AMD codes only relates with $a=\sum_{1\leq i\leq m}k_i$
as defined in \cite{PS} but
disregards the exact size-distribution $K$ of $\cA$.
There may exist several BSWEDFs with different $K$ which correspond to weak AMD codes
with exactly the same parameter $a$. Thus, although the BSWEDF gives a characterization of the weak AMD code,
in general, the optimal BSWEDF for a given $K$ does not necessarily correspond to an $R$-optimal
weak AMD code for a given $a$.

\begin{definition}\label{def_strong_optimal}
For given $n$, $m$ and $a$, an $(n,m,K,a,\lambda)$-BSWEDF is said to be \textit{strongly optimal}
if $\frac{\lambda}{\widetilde{k}m}=\rho_{(n,m,a)}$, where
\begin{equation}\label{eqn_rh_nma}
\rho_{(n,m,a)}=\min_{K'}\left\{\frac{\lambda'}{\widetilde{k'}m}\,:\,\exists\, (n,m,K',a,\lambda')\text{-BSWEDF}\, s.t.\, \sum_{1\leq i\leq m}k'_i=a\right\}.
\end{equation}
\end{definition}

By Theorem \ref{theorem_AMD_BSWEDF} and Lemma \ref{lemma_bound_BSWEDF}, we have

\begin{corollary}\label{corollary_improved_bound}
For any weak $(n,m,a,\rho)$-AMD code $(S,G,\mathcal{A},E_u)$, we have
\begin{equation*}
\rho\geq \rho_{(n,m,a)}\geq \min_{K}\left\{\left\lceil\frac{\widetilde{k}a(m-1)}{n-1}
\right\rceil\frac{1}{\widetilde{k}m}\,:\,\sum_{1\leq i\leq m}k_i=a\right\},
\end{equation*}
where $|A_i|=k_i$ for any $A_i \in \cA$.
\end{corollary}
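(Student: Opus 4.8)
The plan is to prove the two inequalities in turn, each as a direct consequence of one of the two cited results, and then to chain them. For the first inequality $\rho\ge\rho_{(n,m,a)}$, I would invoke Theorem \ref{theorem_AMD_BSWEDF} to associate the given weak $(n,m,a,\rho)$-AMD code $(S,G,\cA,E_u)$ with an $(n,m,K,a,\lambda)$-BSWEDF $\cA=\{A_s:s\in S\}$ satisfying $\rho=\frac{\lambda}{\widetilde{k}m}$, where $K=(|A_{s_1}|,\dots,|A_{s_m}|)$ and $\sum_{1\le i\le m}|A_{s_i}|=a$. Since this BSWEDF has total size $a$, its size-distribution $K$ is one of the feasible distributions over which the minimum in \eqref{eqn_rh_nma} is taken. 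Hence $\rho=\frac{\lambda}{\widetilde{k}m}$ is one of the quantities being minimized, which gives $\rho\ge\rho_{(n,m,a)}$ immediately.

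For the second inequality, I would start from the minimization defining $\rho_{(n,m,a)}$. For every feasible $K'$ (i.e., every $K'$ with $\sum_{1\le i\le m}k'_i=a$ admitting an $(n,m,K',a,\lambda')$-BSWEDF), Lemma \ref{lemma_bound_BSWEDF} yields $\lambda'\ge\lceil\frac{\widetilde{k'}a(m-1)}{n-1}\rceil$; dividing by $\widetilde{k'}m$ gives
\[
\frac{\lambda'}{\widetilde{k'}m}\ge\left\lceil\frac{\widetilde{k'}a(m-1)}{n-1}\right\rceil\frac{1}{\widetilde{k'}m}.
\]
Because the right-hand side depends on $K'$ only through $\widetilde{k'}$, taking the minimum over all feasible $K'$ on both sides preserves the inequality and produces $\rho_{(n,m,a)}\ge\min_{K'\text{ feasible}}\lceil\frac{\widetilde{k'}a(m-1)}{n-1}\rceil\frac{1}{\widetilde{k'}m}$.

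The final step, and the only point requiring genuine care, is to pass from the minimum over feasible $K'$ to the minimum over all $K$ with $\sum_{1\le i\le m}k_i=a$ that appears on the right-hand side of the corollary. Every feasible $K'$ is in particular a valid size-distribution with $\sum k'_i=a$, so the feasible set is contained in the full set $\{K:\sum k_i=a\}$; enlarging the domain of a minimum can only decrease or preserve its value, whence $\min_{K'\text{ feasible}}(\cdots)\ge\min_{\text{all }K}(\cdots)$. Chaining this with the previous display completes the proof. I expect this domain-relaxation step to be the main obstacle: one must observe that a $K$ attaining the unconstrained minimum on the right need not admit any actual BSWEDF, which is exactly why the second inequality is in general not an equality, and verify that the relaxed minimum is nonetheless a legitimate lower bound.
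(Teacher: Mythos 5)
Your proof is correct and follows essentially the same route as the paper's: Theorem \ref{theorem_AMD_BSWEDF} yields a BSWEDF whose $K$ is feasible in \eqref{eqn_rh_nma}, giving $\rho\geq\rho_{(n,m,a)}$, and Lemma \ref{lemma_bound_BSWEDF} applied to every feasible $K'$ gives the second inequality. The only difference is that you make explicit the domain-relaxation step (feasible $K'$ versus all $K$ with $\sum k_i=a$), which the paper passes over silently; this is a worthwhile clarification but not a different argument.
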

\begin{proof}
Let $(S,G,\mathcal{A},E_u)$ be a weak $(n,m,a,\rho)$-AMD code.
By Theorem \ref{theorem_AMD_BSWEDF}, there exists an $(n,m,K,a,\lambda)$-BSWEDF
with $\lambda= \widetilde{k}m\rho$. Then by Lemma \ref{lemma_bound_BSWEDF} and
\eqref{eqn_rh_nma},
$$\rho=\frac{\lambda}{\widetilde{k}m}\geq \rho_{(n,m,a)}\geq\min_{K}\left\{\left\lceil\frac{\widetilde{k}a(m-1)}{n-1}\right\rceil
\frac{1}{\widetilde{k}m}\,:\,\sum_{1\leq i\leq m}k_i=a\right\}.$$
\end{proof}

\begin{definition}
A weak AMD code with $\rho=\rho_{(n,m,a)}$ is said to be $R$\textit{-optimal} with respect to the bound in
Corollary \ref{corollary_improved_bound}.
\end{definition}

When $(n-1)\mid (\widetilde{k}a(m-1))$,
the bound in Corollary \ref{corollary_improved_bound} is exactly the same as the
one given in Lemma \ref{lemma_R_optimal}.
However, when $(n-1) \nmid (\widetilde{k}a(m-1))$, our bound
in Corollary \ref{corollary_improved_bound} can improve
the known one in Lemma \ref{lemma_R_optimal}.
The following is an easy example.

\begin{corollary}
For any weak $(n,m,a,\rho)$-AMD code $(S,G,\mathcal{A},E_u)$, if $n-1$ is a prime and $a<n-1$, then we have
\begin{equation*}
\rho\geq \min_{K}\left\{\left\lceil\frac{\widetilde{k}a(m-1)}{n-1}\right\rceil
\frac{1}{\widetilde{k}m}\,:\,\sum_{1\leq i\leq m}k_i=a\right\}>\frac{a(m-1)}{m(n-1)}.
\end{equation*}
\end{corollary}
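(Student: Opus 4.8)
The plan is to start from Corollary \ref{corollary_improved_bound}, which already supplies the first inequality, and then to establish the strict inequality by showing that the ceiling operation is never vacuous under the stated hypotheses. The key observation is that if we drop the ceiling in the expression inside the minimum, then for every admissible $K$ we have $\frac{\widetilde{k}a(m-1)}{n-1}\cdot\frac{1}{\widetilde{k}m}=\frac{a(m-1)}{m(n-1)}$, so the right-hand quantity $\frac{a(m-1)}{m(n-1)}$ is exactly the unrounded value. Hence the strict inequality will follow provided that, for every composition $K=(k_1,\dots,k_m)$ of $a$ with $k_i\geq1$, the number $\frac{\widetilde{k}a(m-1)}{n-1}$ fails to be an integer, since then $\lceil\cdot\rceil$ strictly increases it.

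The core of the argument is therefore a divisibility fact: writing $p=n-1$ for the prime, I would show $p\nmid\widetilde{k}a(m-1)$. First, since $a\geq m$ (each $k_i\geq1$), $0<a<p$, and $m\geq2$ (the case $m=1$ being degenerate, with $\rho=0$), one has $0<m-1\leq a-1<p$; thus $p\nmid a$ and $p\nmid(m-1)$. Next comes the delicate factor $\widetilde{k}=\lcm(k_1,\dots,k_m)$, which may well exceed $a$ or even $p$, so here primality is essential. Because $1\leq k_i\leq a<p$ for each $i$, no $k_i$ is divisible by $p$, and since a prime appears in the factorization of an lcm only when it divides one of the arguments, it follows that $p\nmid\widetilde{k}$. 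As $p$ is prime and divides none of the three factors $\widetilde{k}$, $a$, $m-1$, it divides none of their product, giving $p\nmid\widetilde{k}a(m-1)$.

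Consequently $\frac{\widetilde{k}a(m-1)}{n-1}\notin\Z$ for every admissible $K$, so $\lceil\frac{\widetilde{k}a(m-1)}{n-1}\rceil>\frac{\widetilde{k}a(m-1)}{n-1}$ and hence $\lceil\frac{\widetilde{k}a(m-1)}{n-1}\rceil\frac{1}{\widetilde{k}m}>\frac{a(m-1)}{m(n-1)}$. Since there are only finitely many compositions $K$ of $a$, the minimum is attained and is strictly larger than $\frac{a(m-1)}{m(n-1)}$, which combined with Corollary \ref{corollary_improved_bound} yields the claim. The main obstacle I anticipate is precisely the treatment of $\widetilde{k}$: one cannot bound it below $n-1$, so the proof must lean on the primality of $n-1$ together with the bound $k_i<n-1$ rather than on the size of $\widetilde{k}$ itself.
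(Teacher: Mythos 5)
Your proposal is correct and takes essentially the same route as the paper: the paper's own (very terse) proof rests on exactly the facts you isolate, namely $k_i\leq a<n-1$ for all $i$, $m\leq a<n-1$, and the primality of $n-1$, which together give $(n-1)\nmid \widetilde{k}a(m-1)$ and hence a strict ceiling. Your write-up simply makes explicit the steps the paper leaves implicit, in particular that a prime can divide $\lcm(k_1,\dots,k_m)$ only by dividing some $k_i$, and that the minimum over the finitely many compositions of $a$ is attained.
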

\begin{proof}
The lemma follows from the facts that $k_i\leq a<n-1$ for $1\leq i\leq m$, $m\leq a<n-1$, and $n-1$ is a prime.
In this case, $(n-1)\nmid(\widetilde{k}a(m-1))$.
\end{proof}

A more concrete example is listed below.

\begin{example}
Let $n=10$, $m=3$, and $a=5$. Let $B=\{\{5\},\{2\},\{0,4,6\}\}$ be a family of disjoint subsets of $\Z_{10}$,
which corresponding to a weak $(10,3,5,\rho)$-AMD code, where $\rho=\frac{1}{3}\cdot\frac{1}{1}\cdot{1}+\frac{1}{3}\cdot\frac{1}{1}\cdot{0}+\frac{1}{3}\cdot\frac{1}{3}\cdot{1}=\frac{4}{9}$.
According to Lemma \ref {lemma_R_optimal} and definition \ref{def_R_op_PS}, this is not an $R$-optimal weak AMD code.
However, $R$-optimality should mean that random choosing $\Delta$ is an optimal strategy for the adversary.
Clearly, according to Corollary \ref{corollary_improved_bound}, the parameter $\rho$ cannot be smaller then
\begin{equation*}
\begin{split}
&\min_{K}\left\{\left\lceil\frac{\widetilde{k}5(3-1)}{10-1}
\right\rceil\frac{1}{3\widetilde{k}}\,:\,\sum_{1\leq i\leq 3}k_i=5\right\}\\
=&\min\left\{\left\lceil\frac{\lcm (1,1,3)\cdot5\cdot2}{9}\right\rceil\frac{1}{3\lcm(1,1,3)},\left\lceil\frac{\lcm(1,2,2)\cdot5\cdot2}{9}\right\rceil\frac{1}{3\lcm(1,2,2)}\right\}\\
=&\min\left\{\frac{4}{9},\frac{1}{2}\right\}=\frac{4}{9}.\\
\end{split}
\end{equation*}

Therefore, this example should be an $R$-optimal weak $(10,3,5,\rho)$-AMD code.
This trouble is due to the fact that the known bound in Lemma \ref {lemma_R_optimal} is not always tight.
\end{example}

Relationships between optimal weak AMD codes and optimal BSWEDFs are described below.

\begin{corollary}\label{corollary_char}
Let $n$ and $m$ be positive integers.
\begin{itemize}
\item[(I)] {For given $K=(k_1,k_2,\dots,k_m)$, let $\rho_{(n,m,K)}$ denote the
the smallest possible $\rho$ for weak $(n,m,\sum_{1\leq i\leq m}k_i,\rho)$-AMD codes.
Then a  weak $(n,m,a,\rho)$-AMD
code $(S,G,\cA,E_u)$ has the smallest $\rho$, i.e., $\rho=\rho_{(n,m,K)}$ if and only if its corresponding BSWEDF with parameters
$(n,m,K,a,\lambda=\widetilde{k}m\rho)$ is optimal, where
$S=\{s_i\,:\,1\leq i\leq m\}$, $\cA=\{A_{s_i}\,:\,1\leq i\leq m\}$, $k_i=|A_{s_i}|$ for $1\leq i\leq m$, $K=(k_1,k_2,\dots,k_m)$, and $a=\sum_{1\leq i\leq m}k_i$.}

\item[(II)] {For given $a$, there exists an $R$-optimal weak $(n,m,a,\rho)$-AMD
code $(S,G,\cA,E_u)$ with respect to the bound in Corollary \ref{corollary_improved_bound}
if and only if there exists
a strongly optimal $(n,m,K,a,\lambda)$-BSWEDF, where $|G|=n$, $a=\sum_{s\in S}|A_s|$,
$\rho=\rho_{(n,m,a)}$,
and $\lambda=\widetilde{k}m\rho_{(n,m,a)}$.}

\item[(III)]{There exists an $R$-optimal weak $(n,m,a,\rho)$-AMD code $(S,G,\cA,E_u)$ with respect to
the bound in Lemma \ref{lemma_R_optimal} if and only
if there exists
an $(n,m,K,a,\lambda)$-SWEDF,  where $\rho=\frac{a(m-1)}{m(n-1)}$,
and $\lambda=\frac{\widetilde{k}a(m-1)}{n-1}$.}
\end{itemize}
\end{corollary}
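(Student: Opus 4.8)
The plan is to reduce all three equivalences to the correspondence established in Theorem \ref{theorem_AMD_BSWEDF}, which assigns to every weak $(n,m,a,\rho)$-AMD code $(S,G,\cA,E_u)$ a BSWEDF $\cB=\{A_{s_i}\}$ with the same size-distribution $K=(|A_{s_1}|,\dots,|A_{s_m}|)$ and with $\rho=\lambda/(\widetilde{k}m)$, and conversely. Since $\widetilde{k}=\lcm(k_1,\dots,k_m)$ and $m$ are functions of $K$ alone, for each fixed $K$ the map $\lambda\mapsto\lambda/(\widetilde{k}m)$ is a fixed increasing bijection; this single observation drives parts (I) and (II), which are essentially formal, while the real content sits in part (III).

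For part \textbf{(I)}, I would fix $K$ and note that weak AMD codes with this size-distribution correspond to BSWEDFs with this same $K$, under which $\rho=\lambda/(\widetilde{k}m)$ with $\widetilde{k},m$ constant. Hence minimizing $\rho$ over such codes is the same as minimizing $\lambda$ over such BSWEDFs, and the latter is by definition the optimality of the BSWEDF. Thus $\rho=\rho_{(n,m,K)}$ holds exactly when the corresponding $(n,m,K,a,\lambda)$-BSWEDF is optimal. For part \textbf{(II)}, I would simply unwind the definitions: by Theorem \ref{theorem_AMD_BSWEDF} a code satisfies $\rho=\rho_{(n,m,a)}$ if and only if its BSWEDF satisfies $\lambda/(\widetilde{k}m)=\rho_{(n,m,a)}$, which is precisely Definition \ref{def_strong_optimal} of a strongly optimal BSWEDF, the minimum over $K'$ in \eqref{eqn_rh_nma} matching the minimum defining $R$-optimality with respect to Corollary \ref{corollary_improved_bound}. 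Both directions are then immediate from the two halves of Theorem \ref{theorem_AMD_BSWEDF}.

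Part \textbf{(III)} is where the real argument lies. In the forward direction, an $R$-optimal code (with respect to Lemma \ref{lemma_R_optimal}) has $\rho=\frac{a(m-1)}{m(n-1)}$, so its BSWEDF has $\lambda=\widetilde{k}m\rho=\frac{\widetilde{k}a(m-1)}{n-1}$, and since $\lambda$ is a positive integer this already forces $(n-1)\mid\widetilde{k}a(m-1)$. The key step, which I expect to be the main obstacle, is to upgrade this BSWEDF to a genuine SWEDF: from \eqref{eqn_total_dif} the multi-set $\bigcup_{1\le i\ne j\le m}D(B_i,\widetilde{B}_j)$ has exactly $\widetilde{k}a(m-1)=\lambda(n-1)$ elements, all of them nonzero (since $B_i\cap B_j=\emptyset$ for $i\ne j$ guarantees no zero difference), while the BSWEDF condition says each of the $n-1$ nonzero group elements occurs at most $\lambda$ times. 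A total of $\lambda(n-1)$ distributed over $n-1$ values each capped at $\lambda$ forces every value to occur exactly $\lambda$ times; that is, equality holds and $\cB$ is an $(n,m,K,a,\lambda)$-SWEDF.

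For the converse of part \textbf{(III)}, I would observe that an SWEDF is in particular a BSWEDF with the same $\lambda$, apply the backward direction of Theorem \ref{theorem_AMD_BSWEDF} together with $\lambda=\frac{\widetilde{k}a(m-1)}{n-1}$ from Lemma \ref{lemma_bound_BSWEDF}, and compute $\rho=\lambda/(\widetilde{k}m)=\frac{a(m-1)}{m(n-1)}$, so the resulting weak AMD code meets the Paterson-Stinson bound with equality and is $R$-optimal with respect to Lemma \ref{lemma_R_optimal}. The only subtlety worth flagging is the counting step above: it is the place where the combinatorial ``at most'' hypothesis of a BSWEDF is promoted to the ``exactly'' conclusion of an SWEDF, and it relies essentially on the exact divisibility $(n-1)\mid\widetilde{k}a(m-1)$ forced by optimality.
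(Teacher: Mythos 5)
Your proposal is correct and follows essentially the same route as the paper: parts (I) and (II) via the monotone correspondence $\rho=\lambda/(\widetilde{k}m)$ from Theorem \ref{theorem_AMD_BSWEDF}, and part (III) via Theorem \ref{theorem_AMD_BSWEDF} combined with the counting identity \eqref{eqn_total_dif} of Lemma \ref{lemma_bound_BSWEDF}. The paper's proof is terse and leaves the pigeonhole step (promoting ``at most $\lambda$'' to ``exactly $\lambda$'' when $\lambda(n-1)$ equals the total number of differences) implicit in its citation of Lemma \ref{lemma_bound_BSWEDF}, whereas you spell it out explicitly; the substance is identical.
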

\begin{proof}
By Theorem \ref{theorem_AMD_BSWEDF}, for given $n$, $m$, $K$ (or $a$, resp.), a
weak AMD code with the smallest $\rho$ is equivalent to a
BSWEDF with the smallest $\lambda$, i.e., an optimal (or strongly optimal, resp.) BSWEDF.
The third part of the result follows directly
from Theorem \ref{theorem_AMD_BSWEDF} and Lemma \ref{lemma_bound_BSWEDF}.
\end{proof}

\begin{example}
Let $n=10$, $m=3$, and $a=5$. Let $\cB^{(1)}=\{B^{(1)}_1=\{5\},B^{(1)}_2=\{4,6\},B^{(1)}_3=\{2,8\}\}$ and
$\cB^{(2)}=\{B^{(2)}_1=\{5\},B^{(2)}_2=\{2\},B^{(2)}_3=\{0,4,6\}\}$ be two families of disjoint subsets
of $\Z_{10}$. It is easy to verify that
\begin{equation*}
\bigcup_{1\leq i\leq 3}D\left(B^{(1)}_i,\widetilde{B}^{(1)}_j\right)\subseteq 3\boxtimes (\Z_{10}\backslash\{0\})
\end{equation*}
and
\begin{equation*}
\bigcup_{1\leq i\leq 3}D\left(B^{(2)}_i,\widetilde{B}^{(2)}_j\right)\subseteq 4\boxtimes (\Z_{10}\backslash\{0\}).
\end{equation*}
According to Lemma \ref{lemma_bound_BSWEDF}, $\cB^{(1)}$ is an optimal $(10,3,(1,2,2),5,3)$-BSWEDF and
$\cB^{(2)}$ is an optimal $(10,3,(1,1,3),5,4)$-BSWEDF. By Corollary \ref{corollary_improved_bound},
$$\rho_{(10,3,5)}\geq \min_{K}\left\{\left\lceil\frac{\widetilde{k}5(3-1)}{10-1}
\right\rceil\frac{1}{3\widetilde{k}}\,:\,\sum_{1\leq i\leq 3}k_i=5\right\}=\frac{4}{9}.$$
Thus, by Definition \ref{def_strong_optimal}, $\cB^{(2)}$ is in fact not only an optimal, but a strongly optimal BSWEDF.
By Corollary \ref{corollary_char}. (II), we can obtain a corresponding  $R$-optimal weak AMD code with respect
to the bound in Corollary \ref{corollary_improved_bound} from $\cB^{(2)}$.
\end{example}

Although the weak $(n,m,a,\rho_{(n,m,K)}=\frac{\lambda}{\widetilde{k}m})$-AMD
code $(S,G,\cA,E_u)$ based on an optimal $(n,m,K,a,\lambda)$-BSWEDF
may sometimes not correspond to an $R$-optimal weak AMD code with parameters
$(n,m,a,\rho_{(n,m,a)})$, the difference $\rho_{(n,m,K)}-\rho_{(n,m,a)}$ is not big.

\begin{lemma}
Let $a=\sum_{A\in \cA}|A|=\sum_{1\leq i\leq m}k_i$.
Let $(S,G,\cA,E_u)$ be the weak $(n,m,a,\rho=\frac{\lambda}{\widetilde{k}m})$-AMD
code based on an optimal
$(n,m,K,a,\lambda)$-BSWEDF with $\lambda=\lceil\frac{\widetilde{k}a(m-1)}{n-1}\rceil$,
and let $(S,G,\cA',E_u)$ be the $R$-optimal weak $(n,m,a,\rho_{(n,m,a)})$-AMD
code with respect to the bound in
Corollary \ref{corollary_improved_bound}. Then we have
\begin{equation*}
0 \leq\rho_{(n,m,K)}-\rho_{(n,m,a)}\leq \frac{1}{\widetilde{k}m}.
\end{equation*}
\end{lemma}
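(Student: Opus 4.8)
The plan is to sandwich $\rho_{(n,m,a)}$ between the Paterson--Stinson value $\frac{a(m-1)}{m(n-1)}$ and $\rho_{(n,m,K)}$, and then observe that $\rho_{(n,m,K)}$ itself exceeds the Paterson--Stinson value by at most one rounding step. First I would record what the hypotheses give: since the underlying $(n,m,K,a,\lambda)$-BSWEDF is optimal, Corollary \ref{corollary_char}(I) together with Theorem \ref{theorem_AMD_BSWEDF} yields $\rho_{(n,m,K)}=\frac{\lambda}{\widetilde{k}m}$, while by assumption $\lambda=\lceil\frac{\widetilde{k}a(m-1)}{n-1}\rceil$, so that
\begin{equation*}
\rho_{(n,m,K)}=\frac{1}{\widetilde{k}m}\left\lceil\frac{\widetilde{k}a(m-1)}{n-1}\right\rceil .
\end{equation*}

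For the left inequality $\rho_{(n,m,K)}-\rho_{(n,m,a)}\geq 0$, I would note that the size-distribution $K$ satisfies $\sum_{1\leq i\leq m}k_i=a$, so the BSWEDF at hand is one of the feasible families appearing in the minimization \eqref{eqn_rh_nma} that defines $\rho_{(n,m,a)}$. Hence $\rho_{(n,m,a)}\leq\frac{\lambda}{\widetilde{k}m}=\rho_{(n,m,K)}$, which is exactly the desired lower bound.

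For the right inequality, the idea is to replace the genuine minimum $\rho_{(n,m,a)}$ by the cruder but easier Paterson--Stinson bound. Since $(S,G,\cA',E_u)$ is in particular a weak $(n,m,a,\rho_{(n,m,a)})$-AMD code, Lemma \ref{lemma_R_optimal} gives $\rho_{(n,m,a)}\geq\frac{a(m-1)}{m(n-1)}$. Writing $x=\frac{\widetilde{k}a(m-1)}{n-1}$, so that $\frac{a(m-1)}{m(n-1)}=\frac{x}{\widetilde{k}m}$, I would then estimate
\begin{equation*}
\rho_{(n,m,K)}-\rho_{(n,m,a)}\leq\frac{\lceil x\rceil}{\widetilde{k}m}-\frac{x}{\widetilde{k}m}=\frac{\lceil x\rceil-x}{\widetilde{k}m}\leq\frac{1}{\widetilde{k}m},
\end{equation*}
using the elementary bound $0\leq\lceil x\rceil-x<1$. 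This settles both directions.

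There is no serious obstacle here; the one point deserving care is that I lower-bound $\rho_{(n,m,a)}$ by the weaker Lemma \ref{lemma_R_optimal} rather than by its own defining minimum \eqref{eqn_rh_nma}. This substitution is legitimate because $\rho_{(n,m,a)}$ is a minimum of probabilities each of which obeys the Paterson--Stinson bound, and it suffices precisely because the optimal BSWEDF for $K$ already meets the integer-rounded bound $\lceil\frac{\widetilde{k}a(m-1)}{n-1}\rceil$, so $\rho_{(n,m,K)}$ can overshoot the Paterson--Stinson value by no more than the single rounding increment $\frac{1}{\widetilde{k}m}$.
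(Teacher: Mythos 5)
Your proof is correct and follows essentially the same route as the paper's: both identify $\rho_{(n,m,K)}=\lceil\frac{\widetilde{k}a(m-1)}{n-1}\rceil\frac{1}{\widetilde{k}m}$ from optimality, get the lower bound $0$ because $K$ is feasible in the minimization defining $\rho_{(n,m,a)}$, and get the upper bound by replacing $\rho_{(n,m,a)}$ with the Paterson--Stinson value $\frac{a(m-1)}{m(n-1)}=\frac{x}{\widetilde{k}m}$ and using $\lceil x\rceil-x\leq 1$. Your write-up merely makes explicit the steps the paper compresses into a single chain of inequalities.
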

\begin{proof}
The lemma follows directly from the fact that
$$0 \leq\rho_{(n,m,K)}-\rho_{(n,m,a)}= \left\lceil\frac{\widetilde{k}a(m-1)}{n-1}\right\rceil\frac{1}{\widetilde{k}m}-\rho_{(n,m,a)}\leq \left\lceil\frac{\widetilde{k}a(m-1)}{n-1}\right\rceil\frac{1}{\widetilde{k}m}-\frac{a(m-1)}{m(n-1)}\leq \frac{1}{\widetilde{k}m}.$$

\end{proof}

In \cite{HP}, Huczynska and Paterson characterized  $R$-optimal AMD codes
$(S,G,\cA,E_u)$ by reciprocally-weighted external difference families,
which can be defined as follows.

\begin{definition}[\cite{HP}]\label{def_RWEDF}
Let ${\cB} = \{B_i: 1 \le i \le m\}$ be a family of subsets of $G$.
Let $K=(k_1,k_2,\cdots,k_m)$ with $k_i=|B_i|$ for $1\leq i\leq m$ and
$\widetilde{k}=\text{lcm}(k_1,k_2,\cdots,k_m)$. Then  $\cB$
is said to be an $(n,m,(k_1,k_2,\cdots,k_m),d)$ \textit{reciprocally-weighted external
difference family} (RWEDF) if
\begin{equation*}
d=\sum_{1\leq i\leq m}\frac{N_i(\delta)}{k_i} \text{ for each }\delta\in G\backslash \{0\},
\end{equation*}
where $$N_i(\delta)\triangleq \left|\left\{(b_i,b_j):b_i\in B_i,\,\,
b_j\in\bigcup_{1\leq t\ne i\leq m}B_t, \text{ and }b_j-b_i=\delta\right\}\right|.$$
\end{definition}

\begin{theorem}[\cite{HP}]\label{theorem_AMD_RWEDF}
A weak $(n,m,a,\rho)$-AMD code $(S,G,\cA,E_u)$ is $R$-optimal with respect to
the bound in Lemma \ref{lemma_R_optimal} if and only
if there exists
an $(n,m,K,a,d)$-RWEDF,  where $\rho=\frac{a(m-1)}{m(n-1)}$,
and $d=\frac{a(m-1)}{n-1}$.
\end{theorem}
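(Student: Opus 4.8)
The plan is to route the proof through the already-established characterization in Corollary \ref{corollary_char}(III), which identifies $R$-optimal (with respect to Lemma \ref{lemma_R_optimal}) weak AMD codes with $(n,m,K,a,\lambda)$-SWEDFs for $\lambda=\frac{\widetilde k a(m-1)}{n-1}$. It therefore suffices to prove the purely combinatorial claim that SWEDFs and RWEDFs are one and the same object: a family $\cB=\{B_i:1\le i\le m\}$ of disjoint subsets of $G$ is an $(n,m,K,a,\lambda)$-SWEDF if and only if it is an $(n,m,K,a,d)$-RWEDF with $d=\lambda/\widetilde k$. Granting this, setting $\lambda=\frac{\widetilde k a(m-1)}{n-1}$ (forced by Lemma \ref{lemma_bound_BSWEDF}) yields $d=\frac{a(m-1)}{n-1}$, and chaining with Corollary \ref{corollary_char}(III) gives the theorem.

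The heart of the argument is a single multiplicity count. Fix $\delta\in G\backslash\{0\}$ and compute how often $\delta$ occurs in $\bigcup_{1\le i\ne j\le m}D(B_i,\widetilde B_j)$. Since $\widetilde B_j=\frac{\widetilde k}{k_j}\boxtimes B_j$ repeats each element of $B_j$ exactly $\widetilde k/k_j$ times, for fixed $i\ne j$ we have $\sharp(\delta,D(B_i,\widetilde B_j))=\frac{\widetilde k}{k_j}\,|\{(a,b):a\in B_i,\ b\in B_j,\ a-b=\delta\}|$. Summing over all $i\ne j$ and grouping by the index $j$ of the weighted (subtracted) set, I would rewrite $\sum_{i\ne j}|\{(a,b):a\in B_i,\ b\in B_j,\ a-b=\delta\}|$ as the number of pairs whose subtracted element lies in $B_j$ and whose minuend ranges over $\bigcup_{t\ne j}B_t$ --- this is exactly $N_j(\delta)$ of Definition \ref{def_RWEDF}. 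Hence
\begin{equation*}
\sharp\!\left(\delta,\ \bigcup_{1\le i\ne j\le m}D(B_i,\widetilde B_j)\right)=\sum_{1\le j\le m}\frac{\widetilde k}{k_j}\,N_j(\delta)=\widetilde k\sum_{1\le j\le m}\frac{N_j(\delta)}{k_j}.
\end{equation*}

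With this identity, the two defining conditions translate into one another verbatim: the multiset $\bigcup_{i\ne j}D(B_i,\widetilde B_j)$ equals $\lambda\boxtimes(G\backslash\{0\})$ exactly when $\sharp(\delta,\cdot)=\lambda$ for every nonzero $\delta$, i.e.\ when $\sum_j N_j(\delta)/k_j=\lambda/\widetilde k$ is independent of $\delta$, which is precisely the RWEDF condition with $d=\lambda/\widetilde k$. A summation of $N_j(\delta)$ over all $\delta\ne0$ (using disjointness of the $B_t$, so $\sum_{\delta\ne0}N_j(\delta)=k_j(a-k_j)$) confirms $d(n-1)=a(m-1)$, matching both the value $\lambda=\frac{\widetilde k a(m-1)}{n-1}$ from Lemma \ref{lemma_bound_BSWEDF} and the claimed $d=\frac{a(m-1)}{n-1}$.

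I expect the only real pitfall to be bookkeeping: keeping straight which set supplies the subtracted element and which supplies the minuend in $D(B_i,\widetilde B_j)$ versus in $N_i(\delta)$, and verifying that the weight $\widetilde k/k_j$ attaches to the set $B_j$ whose element plays the role of $b_i$ in $N_j$. Once the roles are correctly matched so that grouping by the weighted set reproduces the $N_j(\delta)$, the equivalence is a clean identity and everything else is substitution of the forced parameter values.
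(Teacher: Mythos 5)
Your proposal is correct, but the comparison here is unusual: the paper itself gives no proof of this statement---it is imported from \cite{HP} as a known result---and the paper's own text runs the logic in the opposite direction, using Theorem \ref{theorem_AMD_RWEDF} (or, as it says, ``Definitions \ref{def_BSWEDF} and \ref{def_RWEDF}'') to observe that an $(n,m,K,a,d)$-RWEDF is essentially the same object as an $(n,m,K,a,\lambda)$-SWEDF with $d=\lambda/\widetilde{k}$, and to assert that Theorem \ref{theorem_AMD_BSWEDF} and Corollary \ref{corollary_char} generalize Theorem \ref{theorem_AMD_RWEDF}. Your proof makes that gesture rigorous and non-circular: you prove the SWEDF--RWEDF identification purely from the definitions via the multiplicity count $\sharp\bigl(\delta,\bigcup_{i\ne j}D(B_i,\widetilde{B}_j)\bigr)=\widetilde{k}\sum_{j}N_j(\delta)/k_j$ (your index bookkeeping is right---the weighted set $\widetilde{B}_j$ supplies the subtrahend, which is exactly the role of $b_i\in B_i$ in the paper's $N_i(\delta)$, as the paper's own remark $N_i(\delta)=\sharp\bigl(\delta,\bigcup_{j\ne i}D(B_j,B_i)\bigr)$ confirms), and then you invoke Corollary \ref{corollary_char}(III) to translate the SWEDF characterization into the AMD-code statement. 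The original argument in \cite{HP} instead works directly on the code, expressing each $\rho_\Delta$ through the $N_i(\Delta)$ and characterizing when the maximum equals the average; your route buys economy, since everything reduces to one identity plus results already established in the paper. One caveat you inherit from the paper: Definition \ref{def_RWEDF} as reproduced does not require the $B_i$ to be disjoint, whereas SWEDFs and AMD encoding sets are disjoint by definition, and both your identification and your verification $\sum_{\delta\ne 0}N_j(\delta)=k_j(a-k_j)$ (hence $d=a(m-1)/(n-1)$) silently use disjointness; so, strictly, your equivalence is between SWEDFs and RWEDFs whose member sets are pairwise disjoint, which is the only case relevant to the theorem.
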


Clearly, $N_i(\delta)=\sharp\left(\delta, \bigcup_{1\leq j\leq m\atop j\ne i} D(B_j,B_i)\right)$
for $1\leq i\leq m$, and
by Theorem \ref{theorem_AMD_RWEDF} and Corollary \ref{corollary_char}
or Definitions \ref{def_BSWEDF} and \ref{def_RWEDF},
we know that an $(n,m,K,a,d)$-RWEDF is essentially the same as an $(n,m,K,a,\lambda)$-SWEDF, where
$d=\frac{\lambda}{\widetilde{k}}$. Therefore, Theorem \ref{theorem_AMD_BSWEDF} and Corollary
\ref{corollary_char} provide more
combinatorial characterizations for various weak AMD codes $(S,G,\cA,E_u)$.
These results can be viewed as a generalization of Theorem \ref{theorem_AMD_RWEDF}.
As a byproduct, we have the following property
for an $(n,m,K,a,d)$-RWEDF directly from Lemma \ref{lemma_bound_BSWEDF}
and Corollary \ref{corollary_char}. (III).
\begin{corollary}
A necessary condition for the existence of an $(n,m,K,a,d)$-RWEDF, or equivalently
an $R$-optimal weak $(n,m,a,\rho)$-AMD code $(S,G,\cA,E_u)$ with respect to Lemma \ref{lemma_R_optimal},
is $(n-1)\mid (\widetilde{k}a(m-1))$, where $K=(k_1=|A_{s_1}|,k_2=|A_{s_2}|,\cdots,k_m=|A_{s_m}|)$ and
$\widetilde{k}=\lcm(k_1,k_2,\cdots,k_m)$.
\end{corollary}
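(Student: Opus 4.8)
The plan is to reduce the statement to the divisibility fact already proved for SWEDFs in Lemma \ref{lemma_bound_BSWEDF}. The key observation, recorded in the discussion preceding the corollary, is that an $(n,m,K,a,d)$-RWEDF is essentially the same object as an $(n,m,K,a,\lambda)$-SWEDF, the two parameters being linked by $d=\frac{\lambda}{\widetilde{k}}$; moreover, by Corollary \ref{corollary_char}(III), an $R$-optimal weak $(n,m,a,\rho)$-AMD code with respect to Lemma \ref{lemma_R_optimal} exists if and only if such an SWEDF exists. Hence in all three formulations the hypothesis amounts to exactly the same thing: the existence of an $(n,m,K,a,\lambda)$-SWEDF on a group $G$ of order $n$.

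First I would invoke either the RWEDF-to-SWEDF identification or Corollary \ref{corollary_char}(III) to pass from the assumed object to an $(n,m,K,a,\lambda)$-SWEDF, where $K=(k_1,\dots,k_m)$ records the block sizes and $\widetilde{k}=\lcm(k_1,\dots,k_m)$. Then I would apply the SWEDF half of Lemma \ref{lemma_bound_BSWEDF}, which asserts precisely that for an $(n,m,K,a,\lambda)$-SWEDF one has $(n-1)\mid(\widetilde{k}a(m-1))$, together with the exact value $\lambda=\frac{\widetilde{k}a(m-1)}{n-1}$. This immediately yields the claimed necessary condition. Should one prefer to argue without citing Lemma \ref{lemma_bound_BSWEDF}, the same conclusion follows directly from the counting identity \eqref{eqn_total_dif}: the multi-set $\bigcup_{1\leq i\ne j\leq m}D(B_i,\widetilde{B}_j)$ has size $\widetilde{k}a(m-1)$, and in the SWEDF (equality) case it covers each of the $n-1$ nonzero elements of $G$ exactly $\lambda$ times, forcing $\lambda(n-1)=\widetilde{k}a(m-1)$ and hence the divisibility.

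There is essentially no obstacle here, since the corollary is a direct byproduct of results already in hand. The only point needing a moment of care is to confirm that the parameter dictionary between the three equivalent formulations is applied consistently, in particular that the $\widetilde{k}$ appearing on the RWEDF/AMD-code side is the very same least common multiple $\lcm(k_1,\dots,k_m)$ used in the SWEDF bound, so that the divisor $\widetilde{k}a(m-1)$ is left unchanged under the translation. Once that bookkeeping is verified, the divisibility $(n-1)\mid(\widetilde{k}a(m-1))$ transfers verbatim, completing the proof.
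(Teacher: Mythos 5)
Your proposal is correct and follows essentially the same route as the paper: the paper also derives this corollary directly from the identification of $(n,m,K,a,d)$-RWEDFs with $(n,m,K,a,\lambda)$-SWEDFs (via $d=\frac{\lambda}{\widetilde{k}}$), Corollary \ref{corollary_char}~(III), and the SWEDF divisibility statement in Lemma \ref{lemma_bound_BSWEDF}. Your optional fallback via the counting identity \eqref{eqn_total_dif} is simply the proof of Lemma \ref{lemma_bound_BSWEDF} inlined, so it does not constitute a genuinely different argument.
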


In Figure \ref{figure AMD}, we summarize the relationships between weak AMD codes and
BSWEDFs, where SO-BSWEDF, O-BSWEDF, and OW-AMD-code denote strongly optimal
BSWEDF, optimal BSWEDF, and $R$-optimal weak AMD-code, respectively.

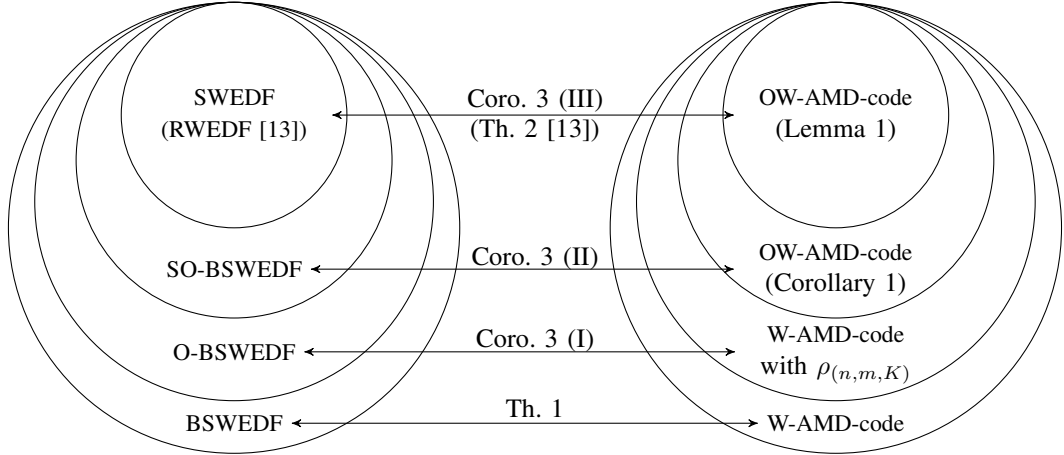
\begin{figure}[!t]
\centering
\begin{tikzpicture}
\tikzset{venn circle/.style={draw,circle,opacity=1}}
\node [venn circle, minimum width=6cm] (A) at (0,0) {};
\node [venn circle, minimum width=5.3cm] (B) at (0,0.35) {};
\node [venn circle, minimum width=4.2cm ] (C) at (0,0.9) {};
\node [venn circle, minimum width=3cm ] (D) at (0,1.5) {};

\node [] (SWEDF) at (0,1.5cm) {\begin{tabular}{c}
 \small SWEDF\\
\small (RWEDF \cite{HP})\\
  \end{tabular}};
\node [] (BSWEDF) at (0,-2.6cm) {\small BSWEDF};
\node [] (O-BSWEDF) at (0,-1.65cm) {\small O-BSWEDF};
\node [] (SO-BSWEDF) at (0,-0.55cm) {\small SO-BSWEDF};

\node [venn circle, minimum width=6cm] (A1) at (8,0) {};
\node [venn circle, minimum width=5.3cm] (B1) at (8,0.35) {};
\node [venn circle, minimum width=4.2cm ] (C1) at (8,0.9) {};
\node [venn circle, minimum width=3cm ] (D1) at (8,1.5) {};

\node [] (ROWAMD) at (8,1.5cm) {\begin{tabular}{c}
 \small OW-AMD-code\\
(Lemma \ref{lemma_R_optimal})\\
  \end{tabular}};
\node [] (AMD) at (8,-2.6cm) {\small W-AMD-code};
\node [] (KOAMD) at (8,-1.65cm) {\begin{tabular}{c}
 \small W-AMD-code\\
with $ \rho_{(n,m,K)}$\\
  \end{tabular}};
\node [] (OWAMD) at (8,-0.55cm) {\begin{tabular}{c}
 \small OW-AMD-code\\
(Corollary \ref{corollary_improved_bound})\\
  \end{tabular}};

\draw[black,<->] (ROWAMD) -- (SWEDF) node at (4,1.7cm) {Coro. \ref{corollary_char} (III) };
\node [] (REMARK) at (4,1.3cm) {(Th. 2 \cite{HP})};
\draw[black,<->] (KOAMD) -- (O-BSWEDF) node at (4,-0.4cm) {Coro. \ref{corollary_char} (II)};
\draw[black,<->] (OWAMD) -- (SO-BSWEDF) node at (4,-1.5cm) {Coro. \ref{corollary_char} (I)};
\draw[black,<->] (AMD) -- (BSWEDF) node at (4,-2.4cm) {Th. \ref{theorem_AMD_BSWEDF}};
\end{tikzpicture}
\caption{The relationships between AMD codes and BSWEDFs}
\label{figure AMD}
\end{figure}

\subsection{Among EDFs, SEDFs, PEDFs, SWEDFs, and BSWEDFs}
In general, an EDF is not necessarily an SWEDF. However, in the following cases,
an EDF is always an SWEDF. First of all, we consider the regular case.
\begin{lemma}
A regular $(n,m,k,\lambda)$-EDF forms an $(n,m,K=(k,k,\dots,k),a=mk,\lambda)$-SWEDF.
\end{lemma}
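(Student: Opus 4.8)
The plan is to unwind the definitions and observe that in the regular case the standard weighting is trivial, so that the BSWEDF condition collapses exactly onto the EDF condition. First I would record that for a regular $(n,m,k,\lambda)$-EDF we have $k_1=k_2=\cdots=k_m=k$, hence $\widetilde{k}=\lcm(k,k,\dots,k)=k$ and $a=mk$. Consequently each weighted multi-set satisfies $\widetilde{B}_i=\frac{\widetilde{k}}{k_i}\boxtimes B_i=\frac{k}{k}\boxtimes B_i=B_i$, so the standard weighting does nothing at all.

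Given this, I would substitute $\widetilde{B}_j=B_j$ into the defining multi-set of the BSWEDF and note the identity
\begin{equation*}
\bigcup_{1\leq i\ne j\leq m}D(B_i,\widetilde{B}_j)=\bigcup_{1\leq i\ne j\leq m}D(B_i,B_j).
\end{equation*}
By the EDF hypothesis (Definition of EDF in the excerpt), the right-hand multi-set equals $\lambda\boxtimes(G\backslash\{0\})$, in which every nonzero element of $G$ appears exactly $\lambda$ times. This shows that $\lambda$ is the smallest positive integer with
\begin{equation*}
\bigcup_{1\leq i\ne j\leq m}D(B_i,\widetilde{B}_j)\subseteq\lambda\boxtimes(G\backslash\{0\}),
\end{equation*}
since every nonzero element attains the bound exactly. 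By Definition \ref{def_BSWEDF}, the family $\cB$ is therefore an $(n,m,K,a,\lambda)$-BSWEDF with $K=(k,k,\dots,k)$ and $a=mk$; indeed the exact-equality version shows it is even an SWEDF, as claimed.

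I do not expect any genuine obstacle here: the entire argument is the observation that regularity forces $\widetilde{k}=k$, which makes the weighting map each $B_i$ to itself and reduces the weighted external difference condition to the ordinary one. The only point requiring a line of care is confirming that the \emph{exact} EDF equation (not merely the bounded containment) yields the SWEDF conclusion rather than just the BSWEDF conclusion, and that $\lambda$ really is minimal in the BSWEDF sense; both follow immediately because every nonzero difference occurs with multiplicity exactly $\lambda$, so no smaller integer can serve as the covering bound.
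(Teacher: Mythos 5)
Your proposal is correct and matches the paper's approach: the paper simply states that the lemma ``follows directly from the definitions of EDF and SWEDF,'' and your argument is exactly that unwinding, namely that regularity forces $\widetilde{k}=k$, hence $\widetilde{B}_i=B_i$, so the weighted difference multi-set coincides with the ordinary external difference multi-set $\lambda\boxtimes(G\backslash\{0\})$. Your added care about minimality of $\lambda$ and the exact-equality (SWEDF rather than merely BSWEDF) conclusion is a sound and welcome elaboration of the same reasoning.
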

The lemma follows directly from the definitions of EDF and SWEDF.

For the case of GSEDFs we have the following result.
\begin{lemma}
If $\{B_i\,:\, 1\leq i\leq m\}$ is an $(n,m;k_1,k_2,\cdots,k_m;
\lambda_1,\lambda_2,\cdots,\lambda_m)$-GSEDF, then $\{B_i\,:\, 1\leq i\leq m\}$
is an $(n,m,(k_1,k_2,\cdots,k_m),a,\lambda)$-SWEDF, where $\lambda=\sum_{1\leq i\leq m}
\frac{\lambda_i\widetilde{k}}{k_i}$.
\end{lemma}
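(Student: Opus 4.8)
The plan is to verify the defining equation of an SWEDF directly, by fixing an arbitrary nonzero element $\delta \in G\backslash \{0\}$ and counting how many times it occurs in the multi-set $\bigcup_{1 \le i \ne j \le m} D(B_i, \widetilde{B}_j)$. First I would record that, since $\widetilde{B}_j = \frac{\widetilde{k}}{k_j} \boxtimes B_j$ merely repeats each element of $B_j$ exactly $\frac{\widetilde{k}}{k_j}$ times, every difference coming from a pair in $B_i \times B_j$ is produced $\frac{\widetilde{k}}{k_j}$ times in $D(B_i, \widetilde{B}_j)$; hence
$$\sharp\bigl(\delta, D(B_i, \widetilde{B}_j)\bigr) = \frac{\widetilde{k}}{k_j}\, \sharp\bigl(\delta, D(B_i, B_j)\bigr).$$
Summing over all ordered pairs $i \ne j$ and grouping the terms according to the second index $j$ then yields
$$\sharp\left(\delta, \bigcup_{1 \le i \ne j \le m} D(B_i, \widetilde{B}_j)\right) = \sum_{1 \le j \le m} \frac{\widetilde{k}}{k_j} \sum_{1 \le i \le m,\, i \ne j} \sharp\bigl(\delta, D(B_i, B_j)\bigr).$$

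The crux is to evaluate the inner sum $\sum_{i \ne j} \sharp(\delta, D(B_i, B_j))$, and this is where the GSEDF hypothesis enters, but with a twist: the GSEDF condition \eqref{eqn_GSEDF} fixes the \emph{first} argument of $D(\cdot,\cdot)$, whereas the weight $\frac{\widetilde{k}}{k_j}$ forces me to hold the \emph{second} argument $j$ fixed. I would bridge this gap using the elementary symmetry of difference multi-sets under negation, namely $\sharp(\delta, D(B_i, B_j)) = \sharp(-\delta, D(B_j, B_i))$. Applying this identity term by term gives
$$\sum_{1 \le i \le m,\, i \ne j} \sharp\bigl(\delta, D(B_i, B_j)\bigr) = \sum_{1 \le i \le m,\, i \ne j} \sharp\bigl(-\delta, D(B_j, B_i)\bigr) = \lambda_j,$$
where the last equality is precisely the GSEDF property for the index $j$ applied to the nonzero element $-\delta$; here it is essential that the GSEDF condition holds for \emph{every} nonzero element, so that replacing $\delta$ by $-\delta$ causes no loss. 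I expect this index-matching step to be the only genuine obstacle; everything else is bookkeeping.

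Substituting back, I obtain
$$\sharp\left(\delta, \bigcup_{1 \le i \ne j \le m} D(B_i, \widetilde{B}_j)\right) = \sum_{1 \le j \le m} \frac{\widetilde{k}}{k_j}\, \lambda_j = \sum_{1 \le i \le m} \frac{\lambda_i \widetilde{k}}{k_i},$$
which is independent of the chosen $\delta$. Since this count is the same constant for every nonzero element, the multi-set $\bigcup_{1 \le i \ne j \le m} D(B_i, \widetilde{B}_j)$ equals $\lambda \boxtimes (G \backslash \{0\})$ with $\lambda = \sum_{1 \le i \le m} \frac{\lambda_i \widetilde{k}}{k_i}$, which gives the equality (not merely the containment) required for an SWEDF. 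I would close by noting that $a = \sum_{i} k_i$ is inherited unchanged and that the disjointness of the $B_i$ demanded by Definition \ref{def_BSWEDF} is already part of the GSEDF hypothesis, so $\{B_i : 1 \le i \le m\}$ is an $(n, m, (k_1, \dots, k_m), a, \lambda)$-SWEDF as claimed.
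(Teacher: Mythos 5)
Your proof is correct and takes essentially the same route as the paper's: both group the external differences by the index of the weighted (second) set and then use the negation symmetry $D(B_j,B_i)=-D(B_i,B_j)$, together with the invariance of $\lambda_i\boxtimes(G\backslash\{0\})$ under negation, to fall back on the GSEDF condition, which fixes the first argument. The only distinction is presentational: you count multiplicities of a fixed $\delta$ and make the negation step explicit, whereas the paper works with multi-set equalities and leaves that argument swap implicit in its ``which means'' step.
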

\begin{proof}
Let $\{B_i\,:\, 1\leq i\leq m\}$ be an $(n,m;k_1,k_2,\cdots,k_m;
\lambda_1,\lambda_2,\cdots,\lambda_m)$-GSEDF, by \eqref{eqn_GSEDF},
\begin{equation*}
\bigcup_{1\leq j\leq m,\,j\ne i}D(B_i,B_j)=\lambda_i \boxtimes (G\backslash \{0\}),
\end{equation*}
which means $$\bigcup_{1\leq j\leq m,\,j\ne i}D(B_j,\widetilde{B}_i)=\frac{\lambda_i\widetilde{k}}{k_i} \boxtimes (G\backslash \{0\}).
$$
Thus, we have $$\bigcup_{1\leq i\leq m}\bigcup_{1\leq j\leq m,\,j\ne i}D(B_j,\widetilde{B}_i)
=\left(\sum_{1\leq i\leq m}\lambda_i\frac{\widetilde{k}}{k_i}\right)\boxtimes (G\backslash \{0\})=\lambda\boxtimes (G\backslash \{0\}),
$$ i.e., $\{B_i\,:\, 1\leq i\leq m\}$
is an $(n,m,(k_1,k_2,\cdots,k_m),a,\lambda)$-SWEDF with $\lambda=\sum_{1\leq i\leq m}
\frac{\lambda_i\widetilde{k}}{k_i}$.
\end{proof}

Similarly, the relationship between PEDFs and SWEDFs can be given by the
following lemma.

\begin{lemma}
If $\{B_i\,:\, 1\leq i\leq m\}$ is an $(n,m;c_1,c_2,\cdots,c_l;w_1,w_2,\cdots,w_l;
\lambda_1,\lambda_2,\cdots,\lambda_l)$-PEDF, then $\{B_i\,:\, 1\leq i\leq m\}$
is an $(n,m,K=(|B_1|,|B_2|,\cdots,|B_m|),a,\lambda)$-SWEDF, where
$\widetilde{k}=\text{lcm}(w_1,w_2,\cdots, w_l)$ and $\lambda=\sum_{1\leq t\leq l}
\frac{\lambda_t\widetilde{k}}{w_t}$.
\end{lemma}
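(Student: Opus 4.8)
The plan is to mimic the proof of the preceding lemma for GSEDFs, replacing the per-index grouping by a grouping according to block size. The first step is to rewrite the defining identity \eqref{eqn_PEDF} of the PEDF so that the weighted multi-sets $\widetilde{B}_i$ appear. Fix a size class $t$ and observe that for \emph{every} index $i$ with $|B_i|=w_t$ the weighting factor is the same, namely $\widetilde{k}/|B_i|=\widetilde{k}/w_t$; this uniformity within a class is the crux of the argument. Since $\widetilde{B}_i=\frac{\widetilde{k}}{w_t}\boxtimes B_i$ for such $i$, we have $D(B_j,\widetilde{B}_i)=\frac{\widetilde{k}}{w_t}\boxtimes D(B_j,B_i)$, so the common factor $\widetilde{k}/w_t$ can be pulled outside the union taken over the class.

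Next I would use the symmetry of $G\backslash\{0\}$ under negation to pass from $D(B_i,B_j)$, as it appears in \eqref{eqn_PEDF}, to $D(B_j,B_i)$. Because $D(B_j,B_i)=-D(B_i,B_j)$ as multi-sets and $g\mapsto-g$ is a bijection of $G\backslash\{0\}$, applying this bijection to the PEDF identity shows that $\bigcup_{\{i:|B_i|=w_t\}}\bigcup_{\{j:j\ne i\}}D(B_j,B_i)=\lambda_t\boxtimes(G\backslash\{0\})$ as well. Combining this with the factoring step yields, for each class $t$,
\begin{equation*}
\bigcup_{\{i:|B_i|=w_t\}}\bigcup_{\{j:1\leq j\leq m,\,j\ne i\}}D(B_j,\widetilde{B}_i)=\frac{\lambda_t\widetilde{k}}{w_t}\boxtimes(G\backslash\{0\}).
\end{equation*}

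Finally I would sum over the $l$ size classes. The sets $\{i:|B_i|=w_t\}$ for $1\leq t\leq l$ partition $\{1,\dots,m\}$, since the $w_t$ are precisely the distinct block sizes; this also gives $\widetilde{k}=\lcm(w_1,\dots,w_l)=\lcm(k_1,\dots,k_m)$, reconciling the two expressions for $\widetilde{k}$. Taking the union of the class-wise identities therefore reassembles the full double union and gives
\begin{equation*}
\bigcup_{1\leq i\ne j\leq m}D(B_j,\widetilde{B}_i)=\left(\sum_{1\leq t\leq l}\frac{\lambda_t\widetilde{k}}{w_t}\right)\boxtimes(G\backslash\{0\}),
\end{equation*}
which, after the harmless relabeling $i\leftrightarrow j$ of the ordered summation, is exactly the defining identity in Definition \ref{def_BSWEDF} of an $(n,m,K,a,\lambda)$-SWEDF with $\lambda=\sum_{1\leq t\leq l}\lambda_t\widetilde{k}/w_t$. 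I do not expect a serious obstacle: the only points requiring care are the bookkeeping of the negation step and the verification that the size classes genuinely partition the index set, so that summing the class-wise equalities reconstructs $\bigcup_{1\leq i\ne j\leq m}$ without over- or under-counting.
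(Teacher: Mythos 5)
Your proof is correct and takes essentially the same route as the paper's: group the blocks by size class, pull the common weight $\widetilde{k}/w_t$ out of each class, note that $\widetilde{k}=\lcm(w_1,\dots,w_l)=\lcm(|B_1|,\dots,|B_m|)$ because the classes partition the index set, and sum the class-wise identities. The only difference is that you spell out the negation step (using $D(B_j,B_i)=-D(B_i,B_j)$ and the symmetry of $G\backslash\{0\}$) needed to move the weighting onto the second argument, a step the paper performs silently when it passes from $D(B_i,B_j)$ to $D(B_j,\widetilde{B}_i)$.
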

\begin{proof}
Since $\{B_i\,:\, 1\leq i\leq m\}$ is an $(n,m;c_1,c_2,\cdots,c_l;w_1,w_2,\cdots,w_l;
\lambda_1,\lambda_2,\cdots,\lambda_l)$-PEDF, by \eqref{eqn_PEDF},
\begin{equation*}
\bigcup_{\{i\,:\,|B_i|=w_t\}}\bigcup_{1\leq j\leq m,\,j\ne i}D(B_i,B_j)=\lambda_t \boxtimes (G\backslash \{0\})
\end{equation*}
for $1\leq t\leq l$. By Definition \ref{def_nonuniform_PEDF}, $|B_i|\in \{w_j\,:\,1\leq j\leq l\}$
for $1\leq i\leq m$.
Thus, for $K=(|B_1|,|B_2|,\cdots, |B_m|)$, we have
$\widetilde{k}=\text{lcm}(|B_1|,|B_2|,\cdots, |B_m|)=\text{lcm}(w_1,w_2,\cdots, w_l).$
Thus, we have $$\bigcup_{1\leq t\leq l}\bigcup_{\{i\,:\,|B_i|=w_t\}}\bigcup_{1\leq j\leq m,\,j\ne i}D(B_j,\widetilde{B}_i)=\left(\sum_{1\leq t\leq l}\lambda_t\frac{\widetilde{k}}{w_t}\right)\boxtimes (G\backslash \{0\})=\lambda \boxtimes (G\backslash \{0\}),
$$ i.e., $\{B_i\,:\, 1\leq i\leq m\}$
is an $(n,m,K=(|B_1|,|B_2|,\cdots,|B_m|),a,\lambda)$-SWEDF, where $\lambda=\sum_{1\leq t\leq l}
\frac{\lambda_t\widetilde{k}}{w_t}$.
\end{proof}

In what follows, we recall an example of SWEDF which is not an EDF, or an GSEDF, or a PEDF.

\begin{example}[\cite{PS}]
Let $G=(\Z_{10},+)$ and $\cB=\{B_1=\{0\},B_2=\{5\},B_3=\{2,3\},B_4=\{6,4\}\}$. Then
$\widetilde{B}_1=\{0,0\},\widetilde{B}_2=\{5,5\},\widetilde{B}_3=\{2,3\},
\widetilde{B}_4=\{6,4\}$. It is easy to check
$$\bigcup_{1\leq i\leq 4}\bigcup_{1\leq j\leq 4,\, j\ne i}D(B_i,\widetilde{B}_j)
=4\boxtimes (G\backslash\{0\}),$$
$$\bigcup_{1\leq i\leq 4}\bigcup_{1\leq j\leq 4,\, j\ne i}D(B_i,B_j)
\ne \lambda \boxtimes(G\backslash\{0\}),$$
$$\bigcup_{2\leq j\leq 4}D(B_1,B_j)
=\{5,8,7,4,6\}\ne \lambda \boxtimes (G\backslash\{0\}),$$
and
$$\bigcup_{3\leq i\leq 4}\bigcup_{1\leq j\leq 4,\, j\ne i}D(B_i,B_j)
\ne \lambda\boxtimes (G\backslash\{0\}),$$
for any positive integer $\lambda$.
Thus, $\cB$ is an SWEDF which
does not form an EDF, or a GSEDF, or a PEDF.
\end{example}

Similarly, a BEDF is not necessarily a BSWEDF in general and we have the following
relationship between BEDFs and BSWEDFs.

\begin{lemma}
The regular $(n,k,\lambda)$-BEDF forms an $(n,m,K=(k,k,\dots,k),a=mk,\lambda_1)$-BSWEDF, where
$\lambda_1\leq \lambda$.
\end{lemma}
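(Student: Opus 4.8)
The plan is to reduce the statement to the definitions by exploiting the fact that regularity trivializes the standard weighting. First I would record that, for a regular BEDF consisting of $m$ disjoint blocks each of cardinality $k$, the least common multiple satisfies $\widetilde{k}=\lcm(k,k,\dots,k)=k$. Consequently each standard weighted multi-set collapses onto its block,
\[
\widetilde{B}_j=\frac{\widetilde{k}}{|B_j|}\boxtimes B_j=\frac{k}{k}\boxtimes B_j=B_j,
\]
so that the weighted external difference multi-set governing the BSWEDF condition coincides with the ordinary external difference multi-set governing the BEDF condition:
\[
\bigcup_{1\leq i\ne j\leq m}D(B_i,\widetilde{B}_j)=\bigcup_{1\leq i\ne j\leq m}D(B_i,B_j).
\]

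Next I would invoke the BEDF hypothesis. Since $\cB$ is a regular BEDF with bound $\lambda$, every nonzero element of $G$ appears at most $\lambda$ times in $\bigcup_{1\leq i\ne j\leq m}D(B_i,B_j)$; by the displayed equality this gives
\[
\bigcup_{1\leq i\ne j\leq m}D(B_i,\widetilde{B}_j)\subseteq\lambda\boxtimes(G\backslash\{0\}).
\]
Thus $\lambda$ is a positive integer for which the containment required by Definition \ref{def_BSWEDF} holds. Because that definition takes the BSWEDF parameter $\lambda_1$ to be the \emph{smallest} positive integer enjoying this containment, and $\lambda$ is one such integer, I would conclude $\lambda_1\leq\lambda$. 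The remaining parameters are immediate from regularity: $K=(k,k,\dots,k)$ and $a=\sum_{1\leq i\leq m}|B_i|=mk$, so $\cB$ is an $(n,m,K=(k,k,\dots,k),a=mk,\lambda_1)$-BSWEDF with $\lambda_1\leq\lambda$.

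There is essentially no obstacle in this argument: the whole content lies in observing that regularity forces $\widetilde{B}_j=B_j$. The only point demanding a little care is the source of the inequality rather than an equality, which is exactly the gap between the ``at most $\lambda$'' upper bound offered by the (not necessarily tight) BEDF parameter and the minimality built into the BSWEDF parameter $\lambda_1$; respecting this minimality clause is what yields $\lambda_1\leq\lambda$.
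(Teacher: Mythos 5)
Your proof is correct and matches the paper's approach: the paper states this lemma without proof, treating it (like the analogous regular EDF/SWEDF lemma) as following directly from the definitions, and your argument is exactly that routine verification. You correctly identify the two key points — regularity forces $\widetilde{k}=k$ so that $\widetilde{B}_j=B_j$, and the inequality $\lambda_1\leq\lambda$ comes from the minimality clause in Definition~\ref{def_BSWEDF} versus the merely-an-upper-bound role of $\lambda$ in the BEDF definition.
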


\begin{lemma}
If $\cB=\{B_i\,:\, 1\leq i\leq m\}$ is an $(n,m;k_1,k_2,\cdots,k_m;
\lambda_1,\lambda_2,\cdots,\lambda_m)$-BGSEDF, then $\cB$
is an $(n,m,(k_1,k_2,\cdots,$ $k_m),a=\sum_{1\leq i\leq m}k_i,\lambda)$-BSWEDF, where $\lambda\leq\sum_{1\leq i\leq m}
\frac{\lambda_i\widetilde{k}}{k_i}$.
\end{lemma}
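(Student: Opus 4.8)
The plan is to mirror the GSEDF-to-SWEDF lemma proved above, replacing its defining equalities by the two containments that define a BGSEDF and a BSWEDF, and then to track how the per-index bounds accumulate. Since a BGSEDF is by definition a family of disjoint subsets of $G$, the family $\cB$ already satisfies the structural hypothesis of Definition \ref{def_BSWEDF}, so only the multiplicity bound on $\lambda$ needs to be established. I will work directly with the multi-set $\bigcup_{1\leq i\neq j\leq m}D(B_i,\widetilde{B}_j)$ appearing in Definition \ref{def_BSWEDF}, grouping its terms by the weighted index $j$.

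First I would fix $j$ and rewrite the corresponding block. Because $\widetilde{B}_j=\frac{\widetilde{k}}{k_j}\boxtimes B_j$ with $k_j\mid\widetilde{k}$, we have $D(B_i,\widetilde{B}_j)=\frac{\widetilde{k}}{k_j}\boxtimes D(B_i,B_j)$, hence $\bigcup_{i\neq j}D(B_i,\widetilde{B}_j)=\frac{\widetilde{k}}{k_j}\boxtimes\bigcup_{i\neq j}D(B_i,B_j)$. The remaining point is to bound $\bigcup_{i\neq j}D(B_i,B_j)$ by $\lambda_j$, but here the fixed index $j$ sits in the \emph{second} argument, whereas the BGSEDF bounds blocks with the fixed index in the first argument. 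I would bridge this by negation: $\bigcup_{i\neq j}D(B_i,B_j)=-\bigcup_{i\neq j}D(B_j,B_i)$, and the defining containment of the BGSEDF gives $\bigcup_{i\neq j}D(B_j,B_i)\subseteq\lambda_j\boxtimes(G\backslash\{0\})$; since $G\backslash\{0\}$ is invariant under $x\mapsto -x$, the multiplicity of any $\delta$ in $\bigcup_{i\neq j}D(B_i,B_j)$ equals that of $-\delta$ in $\bigcup_{i\neq j}D(B_j,B_i)$ and is therefore at most $\lambda_j$, i.e. $\bigcup_{i\neq j}D(B_i,B_j)\subseteq\lambda_j\boxtimes(G\backslash\{0\})$.

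Scaling this by the integer $\frac{\widetilde{k}}{k_j}$ yields $\bigcup_{i\neq j}D(B_i,\widetilde{B}_j)\subseteq\frac{\lambda_j\widetilde{k}}{k_j}\boxtimes(G\backslash\{0\})$, and taking the multi-set union over all $j$ gives $\bigcup_{1\leq i\neq j\leq m}D(B_i,\widetilde{B}_j)\subseteq\left(\sum_{1\leq j\leq m}\frac{\lambda_j\widetilde{k}}{k_j}\right)\boxtimes(G\backslash\{0\})$, because the multiplicity of each $\delta$ in the full multi-set is at most the sum of its multiplicities in the $m$ blocks. As $\sum_{1\leq j\leq m}\frac{\lambda_j\widetilde{k}}{k_j}$ is a positive integer admitting this containment and Definition \ref{def_BSWEDF} takes $\lambda$ to be the \emph{smallest} such positive integer, I conclude $\lambda\leq\sum_{1\leq i\leq m}\frac{\lambda_i\widetilde{k}}{k_i}$. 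I do not anticipate a real obstacle: the only delicate step is the negation argument needed to transfer a first-argument BGSEDF bound to a second-argument block, and the inequality (rather than equality) is forced both because the BGSEDF containments are themselves loose and because $\lambda$ is defined as a minimum over admissible multiplicities.
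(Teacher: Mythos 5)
Your proof is correct and takes essentially the same route as the paper's: group the differences by the weighted index, transfer each per-index BGSEDF bound to the corresponding block, scale by $\widetilde{k}/k_j$, take the multi-set union so multiplicities add, and conclude from the minimality of $\lambda$ in the definition of a BSWEDF. The only difference is cosmetic: you spell out the negation-symmetry step needed to move the fixed index from the first to the second argument, which the paper compresses into a single ``which means.''
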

\begin{proof}
Since $\cB=\{B_i\,:\, 1\leq i\leq m\}$ is an $(n,m;k_1,k_2,\cdots,k_m;
\lambda_1,\lambda_2,\cdots,\lambda_m)$-BGSEDF, by \eqref{eqn_BGSEDF},
\begin{equation*}
\bigcup_{1\leq j\leq m,\,j\ne i}D(B_i,B_j)\subseteq \lambda_i \boxtimes (G\backslash \{0\}),
\end{equation*}
which means
\begin{equation}\label{eqn_BGSEDF_BSWEDF}
\bigcup_{1\leq j\leq m,\,j\ne i}D(B_j,\widetilde{B}_i)\subseteq \lambda_i\frac{\widetilde{k}}{k_i} \boxtimes (G\backslash \{0\}).
\end{equation}
Let $\lambda$ be the smallest positive integer such that
\begin{equation*}
\bigcup_{1\leq i\leq m}\bigcup_{1\leq j\leq m,\,j\ne i}D(B_j,\widetilde{B}_i)\subseteq \lambda \boxtimes (G\backslash \{0\}).
\end{equation*}
Thus, by \eqref{eqn_BGSEDF_BSWEDF}, we have $\lambda \leq \sum_{1\leq i\leq m}
\frac{\lambda_i\widetilde{k}}{k_i}$, i.e., $\cB$
is an $(n,m,(k_1,k_2,\cdots,k_m),a=\sum_{1\leq i\leq m}k_i,\lambda)$-BSWEDF.
\end{proof}

\section{Constructions of optimal BSWEDFs and SWEDFs}\label{sec-construction}
In this section, we are going to construct BSWEDFs and SWEDFs, which are generally not EDFs, or GSEDFs, or PEDFs.

We recall a well-known construction of difference families. Let $q=4k+1$ be
a prime power. Let $\alpha$ be a primitive element of $\F_q$,
\begin{equation}\label{eqn_D_2}
D^{2}_i=\{\alpha^{i+2j}\,:\,0\leq j\leq 2k-1\},\,\,\text{for}\,\,i=0,1
\end{equation}
and
\begin{equation}\label{eqn_D_4}
D^{4}_i=\{\alpha^{i+4j}\,:\,0\leq j\leq k-1\},\,\,\text{for}\,\,0\leq i\leq 3.
\end{equation}
It is well-known that $\{D^{2}_0,D^{2}_1\}$ is a $(q,2k,2k-1)$-DF over the additive group of $\F_q$.

\begin{construction} Let $\cS=\{S_1,S_2,S_3\}$ be the family of disjoint subsets $\Z_{2}\times\F_q$ defined as
\begin{equation*}
  S_1=\{(0,0),(1,0)\},\,\, S_2=\{0\}\times D^{4}_0\cup \{1\}\times D^{4}_2,\text{ and }  S_3=\{0\}\times D^{4}_1\cup \{0\}\times D^{4}_3.
  \end{equation*}
\end{construction}

\begin{theorem}\label{theorem_SWEDF}
Let $\cS=\{S_1,S_2,S_3\}$ be the family defined in Construction A. If $k$ is odd,
then $\cS$ is an optimal $(n=2q,m=3,(2,2k,2k),a=4k+2,\lambda=2k+1)$-BSWEDF.
\end{theorem}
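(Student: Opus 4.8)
The plan is to compute the multiset $\cD=\bigcup_{1\le i\ne j\le 3}D(S_i,\widetilde S_j)$ explicitly and show that its maximum multiplicity over $G\setminus\{0\}$ equals $2k+1$. First note the parameters: $n=2q=8k+2$, $a=4k+2$, $m=3$, and $\widetilde k=\lcm(2,2k,2k)=2k$, so the weighting in Definition \ref{def_BSWEDF} reads $\widetilde S_1=k\boxtimes S_1$ while $\widetilde S_2=S_2$ and $\widetilde S_3=S_3$. A direct evaluation of the bound in Lemma \ref{lemma_bound_BSWEDF} gives $\lceil \widetilde k a(m-1)/(n-1)\rceil=\lceil (16k^2+8k)/(8k+1)\rceil=\lceil 2k+6k/(8k+1)\rceil=2k+1$, since $0<6k<8k+1$. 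Hence it suffices to prove that every nonzero element of $G$ occurs \emph{at most} $2k+1$ times in $\cD$; together with the lower bound this forces $\lambda=2k+1$ and makes the BSWEDF optimal.

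I would split the six difference multisets into the four involving the index $1$, namely $D(S_1,S_2),D(S_1,S_3),D(S_2,\widetilde S_1),D(S_3,\widetilde S_1)$, and the two ``cross'' multisets $D(S_2,S_3)$ and $D(S_3,S_2)$. The first group is elementary: since $S_1$ and $\widetilde S_1$ have all $\F_q$-coordinates equal to $0$, these differences only shift the $\F_q$-coordinate by elements of $D^4_0\cup D^4_2$ (from $S_2$) or $D^2_1$ (from $S_3$). Using that $k$ is odd, so $-1=\alpha^{2k}\in D^4_2$ and hence $-D^4_0=D^4_2$ and $-D^2_1=D^2_1$, one checks that $D(S_1,S_2)\cup D(S_1,S_3)$ equals $\{0,1\}\times\F_q^{\ast}$ with multiplicity $1$, and $D(S_2,\widetilde S_1)\cup D(S_3,\widetilde S_1)$ equals $\{0,1\}\times\F_q^{\ast}$ with multiplicity $k$. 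Thus this group contributes exactly $k+1$ to every element of $\{0,1\}\times\F_q^{\ast}$ and $0$ to $(1,0)$, all these differences having nonzero $\F_q$-coordinate.

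The main work is the cross group, where cyclotomy enters. Sorting by the $\Z_2$-coordinate, the elements with first coordinate $0$ arise from $D(D^4_0,D^2_1)\cup D(D^2_1,D^4_0)$ in the $\F_q$-coordinate, and those with first coordinate $1$ from $D(D^4_2,D^2_1)\cup D(D^2_1,D^4_2)$. For a target $d\in\F_q^{\ast}$, writing $\nu_{A,B}(d)=\sharp(d,D(A,B))$, the layer-$0$ multiplicity is $\nu_{D^4_0,D^2_1}(d)+\nu_{D^2_1,D^4_0}(d)$. The key step is to combine the negation symmetry $\nu_{D^2_1,D^4_0}(d)=\nu_{D^4_0,D^2_1}(-d)$ with $-D^4_0=D^4_2$ (valid precisely because $k$ is odd) to obtain $\nu_{D^4_0,D^2_1}(-d)=\nu_{D^4_2,D^2_1}(d)$, so the layer-$0$ total equals $\nu_{D^4_0,D^2_1}(d)+\nu_{D^4_2,D^2_1}(d)=\nu_{D^2_0,D^2_1}(d)$, the number of representations of $d$ as a quadratic residue minus a non-residue. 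Since $q\equiv1\pmod4$, the quadratic residues form a Paley partial difference set, and a standard character-sum computation gives $\nu_{D^2_0,D^2_1}(d)=(q-1)/4=k$ for \emph{every} $d\ne0$. The same merging (now via $-D^4_2=D^4_0$) shows the layer-$1$ multiplicity also equals $\nu_{D^2_0,D^2_1}(d)=k$. Hence the cross group contributes exactly $k$ to every element of $\{0,1\}\times\F_q^{\ast}$ and $0$ to $(1,0)$.

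Combining the two groups, every element of $\{0,1\}\times\F_q^{\ast}$ occurs exactly $(k+1)+k=2k+1$ times in $\cD$, while $(1,0)$ occurs $0$ times, and $0\notin\cD$ because the $S_i$ are pairwise disjoint. Therefore the smallest $\lambda$ with $\cD\subseteq\lambda\boxtimes(G\setminus\{0\})$ is $2k+1$, which meets the lower bound of Lemma \ref{lemma_bound_BSWEDF} with equality, so $\cS$ is an optimal $(2q,3,(2,2k,2k),4k+2,2k+1)$-BSWEDF. I expect the cyclotomic reduction of the cross group to be the only delicate point; the crucial use of the hypothesis ``$k$ odd'' lies exactly in the identity $-1\in D^4_2$, which simultaneously supplies the merging $D^4_0\cup D^4_2=D^2_0$ under negation and leaves $D^2_1$ fixed, collapsing the quartic difference counts to the uniform Paley count.
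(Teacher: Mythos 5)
Your proposal is correct and follows essentially the same route as the paper's proof: the same decomposition into the differences involving $S_1$ (contributing multiplicity $k+1$ on $\Z_2\times(\F_q\setminus\{0\})$) and the $S_2$--$S_3$ cross differences (contributing $k$), the same crucial use of $-D^4_0=D^4_2$ and $-D^2_1=D^2_1$ when $k$ is odd, and the same final comparison with the bound of Lemma \ref{lemma_bound_BSWEDF}. The only difference is how the cross-group count is sourced: the paper derives it from the $(q,2k,2k-1)$-DF property of $\{D^2_0,D^2_1\}$ via a multiset-halving identity (Lemma \ref{lemma_D_4}), whereas you reduce pointwise to $\nu_{D^2_0,D^2_1}(d)$ and cite the standard cyclotomic-number value $(q-1)/4=k$ --- equivalent formulations of the same quadratic-residue fact.
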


Before the proof we list a well-known result about $D^{2}_0$ and $D^{2}_1$.

\begin{lemma}\label{lemma_D_4}
If $k$ is odd, then the family $\{D^{2}_0,
D^{2}_1\}$ satisfies
\begin{equation*}
D\left(D^{2}_0,D^{2}_1\right)\cup D\left(D^{2}_1,D^{2}_0\right)=2k\boxtimes (\F_q\backslash\{0\})
\end{equation*}
and
\begin{equation*}
D\left(D^{4}_0,D^{4}_1\right)\cup D\left(D^{4}_0,D^{4}_3\right)\cup
D\left(D^{4}_1,D^{4}_0\right)\cup D\left(D^{4}_3,D^{4}_0\right)=k\boxtimes (\F_q\backslash\{0\}).
\end{equation*}
\end{lemma}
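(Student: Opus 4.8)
The plan is to prove the two identities in turn: I would derive the first directly from the stated $(q,2k,2k-1)$-difference-family property of $\{D^2_0,D^2_1\}$ by a counting argument, and then bootstrap the second from the first using the symmetry supplied by the hypothesis that $k$ is odd.

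For the first identity I would fix a nonzero $w\in\F_q$ and count the ordered pairs $(x,y)$ of nonzero field elements with $x-y=w$. Writing $x=y+w$, the only constraints are $y\ne 0$ and $y\ne -w$, so there are exactly $q-2=4k-1$ such pairs. Classifying each pair according to whether $x$ and $y$ are squares (in $D^2_0$) or non-squares (in $D^2_1$), the pairs whose two entries lie in the same class contribute to the internal differences $D(D^2_0)\cup D(D^2_1)$, which by the difference-family property account for exactly $2k-1$ of them; the remaining $(4k-1)-(2k-1)=2k$ pairs are precisely those counted by $D(D^2_0,D^2_1)\cup D(D^2_1,D^2_0)$. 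Since this holds for every nonzero $w$, the first identity follows (this step does not even require $k$ odd).

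For the second identity the key is the location of $-1$. Since $-1=\alpha^{(q-1)/2}=\alpha^{2k}$ and $k$ is odd, we have $2k\equiv 2\pmod 4$, so $-1\in D^4_2$; consequently $D^4_2=-D^4_0$ and $-D^2_1=D^2_1$. Using the bijections $x\mapsto -x$ on $D^4_0$ and $y\mapsto -y$ on $D^2_1$, I would verify the multiset equalities $D(D^4_2,D^2_1)=D(D^2_1,D^4_0)$ and $D(D^2_1,D^4_2)=D(D^4_0,D^2_1)$. Since $D^2_0=D^4_0\cup D^4_2$, the first identity rewrites as
\[
\bigl[D(D^4_0,D^2_1)\cup D(D^2_1,D^4_0)\bigr]\cup\bigl[D(D^4_2,D^2_1)\cup D(D^2_1,D^4_2)\bigr]=2k\boxtimes(\F_q\backslash\{0\}),
\]
and the two bracketed multisets coincide. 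Denoting their common value by $M$, the left side is $M$ counted with doubled multiplicity, so $2\,\sharp(w,M)=2k$ for every nonzero $w$, whence $M=k\boxtimes(\F_q\backslash\{0\})$. Finally $D^2_1=D^4_1\cup D^4_3$ identifies $M$ with $D(D^4_0,D^4_1)\cup D(D^4_0,D^4_3)\cup D(D^4_1,D^4_0)\cup D(D^4_3,D^4_0)$, which is exactly the second identity.

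The main obstacle I anticipate is the careful justification of the multiset equalities such as $D(D^4_2,D^2_1)=D(D^2_1,D^4_0)$: one must check that the sign-change substitutions are genuine bijections that preserve multiplicities, and, crucially, that the hypothesis ``$k$ odd'' is exactly what forces $-1\in D^4_2$ (so that $-D^4_0=D^4_2$, rather than $D^4_0$). Were $k$ even, we would instead have $-1\in D^4_0$, the two brackets would no longer pair up in this way, and the clean halving of the first identity would break down; this is precisely where the parity assumption is genuinely used.
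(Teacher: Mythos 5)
Your proposal is correct and follows essentially the same route as the paper: the first identity comes from the $(q,2k,2k-1)$-DF property of $\{D^2_0,D^2_1\}$ (you merely make the underlying count $q-2-(2k-1)=2k$ explicit), and the second is obtained exactly as in the paper by writing $D^2_0=D^4_0\cup(-D^4_0)$ (valid since $k$ odd forces $-1=\alpha^{2k}\in D^4_2$) and using negation bijections to split the cross-differences into two equal halves. The paper's chain of multiset identities $D(-D^{4}_0,D^{4}_1\cup D^{4}_3)=D(D^{4}_3\cup D^{4}_1,D^{4}_0)$ is the same device as your bracket-pairing, so there is nothing substantive to add.
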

\begin{proof}
By \eqref{eqn_D_2} and \eqref{eqn_D_4}, we have $D^{2}_0=D^{4}_0\cup D^{4}_2=D^{4}_0\cup (-D^{4}_0)$
and $D^{2}_1=D^{4}_1\cup D^{4}_3=D^{4}_1\cup (-D^{4}_1)$, where $\alpha^{2k}=-1$.
The fact $\{D^2_0,D^2_1\}$ is a $(q,2k,2k-1)$-PDF means that
\begin{equation*}
D\left(D^{2}_0,D^{2}_1\right)\cup D\left(D^{2}_1,D^{2}_0\right)=2k\boxtimes (\F_q\backslash\{0\}).
\end{equation*}
The preceding equality can be rewritten as
\begin{equation*}
\begin{split}
2k\boxtimes (\F_{q}\backslash\{0\})=&D\left(D^{2}_0,D^{2}_1\right)\cup D\left(D^{2}_1,D^{2}_0\right)\\
=&D\left(D^{4}_0\cup (-D^{4}_0),D^{4}_1\cup D^{4}_3\right)\cup D\left(D^{4}_1\cup D^{4}_3,D^{4}_0\cup (-D^{4}_0)\right)\\
=&2\boxtimes \left(D\left(D^{4}_0,D^{4}_1\right)\cup D\left(D^{4}_0,D^{4}_3\right)\cup
D\left(D^{4}_1,D^{4}_0\right)\cup D\left(D^{4}_3,D^{4}_0\right)\right),
\end{split}
\end{equation*}
where for the last equality we use the facts $D(-D^{4}_0,D^{4}_1\cup D^{4}_3)=D(-(D^{4}_1\cup D^{4}_3),D^{4}_0)=D(D^{4}_3\cup D^{4}_1,D^{4}_0)$
and $D\left(D^{4}_1\cup D^{4}_3, -D^{4}_0\right)=D\left(D^4_0,-(D^{4}_1\cup D^{4}_3)\right)=D\left(D^4_0,D^{4}_3\cup D^{4}_1\right)$.
This completes the proof.
\end{proof}

{\textit{Proof of Theorem \ref{theorem_SWEDF}:}}
By Definition \ref{def_BSWEDF}, in this case, $\widetilde{k}=\text{lcm}(2k,2)=2k$,
$\widetilde{S}_1=k\boxtimes \{(0,0),(1,0)\}$,
$\widetilde{S}_2=S_2$, and $\widetilde{S}_3=S_3$.
Thus, $D(S_2,\widetilde{S}_3)=D(S_2,{S_3})$ and
$D(S_3,\widetilde{S}_2)=D(S_3,S_2)$. Recall that $S_2=\{0\}\times D^4_0\cup \{1\}\times(-D^4_0)$,
which implies
\begin{equation}\label{eqn_D_0_D_1}
\begin{split}
&D(S_2,\widetilde{S}_3)\cup D(S_3,\widetilde{S}_2)\\
=&D(\{0\}\times D^4_0\cup \{1\}
\times(-D^4_0),\{0\}\times D^4_1 \cup \{0\}\times D^4_3)\\
&\cup D(\{0\}\times D^4_1\cup \{0\}\times D^4_3,\{0\}\times D^4_0\cup \{1\}\times(-D^4_0))\\
=&\bigcup_{i=0,1}\{i\}\times \left(D\left(D^{4}_0,D^{4}_1\right)\cup D\left(D^{4}_0,D^{4}_3\right)\cup
D\left(D^{4}_1,D^{4}_0\right)\cup D\left(D^{4}_3,D^{4}_0\right)\right)\\
=&k\boxtimes\left(\Z_2\times (\F_{q}\backslash\{0\})\right),
\end{split}
\end{equation}
where we use the fact $D^4_1=-D^4_3$ and the last equality holds by Lemma \ref{lemma_D_4}.
By the fact $\bigcup_{0\leq i\leq 3}D^4_i=\F_q\backslash\{0\}$, we have
\begin{equation*}
\begin{split}
D(S_1,\widetilde{S}_2)\cup D(S_2,\widetilde{S}_1)
=&\{0\}\times D^4_2 \cup \{1\}\times D^4_0 \cup \{1\}\times D^4_2 \cup \{0\}\times D^4_0 \\
&\cup k\boxtimes\left( \{0\}\times D^4_2\cup \{1\}\times D^4_0 \cup \{1\}\times D^4_2\cup \{0\}\times D^4_0\right)\\
=&(k+1)\boxtimes\left(\Z_2\times D^2_{0}\right)
\end{split}
\end{equation*}
and
\begin{equation*}
\begin{split}
D(S_1,\widetilde{S}_3)\cup D(S_3,\widetilde{S}_1)
=&\{0\}\times D^2_1\cup \{1\}\times D^2_1\cup k\boxtimes\left( \{0\}\times D^2_1\cup \{1\}\times D^2_1\right)\\
=&(k+1)\boxtimes\left(\Z_2\times D^2_{1}\right),
\end{split}
\end{equation*}
where we use the facts $D^2_i=D^4_i\cup D^4_{i+2}$ and $D^4_i=-D^4_{i+2}$ for $i=0,1$.
The above two equalities imply that
\begin{equation}\label{eqn_S_1_S_2}
\bigcup_{i=2,3} \left(D(S_1,\widetilde{S}_i)\cup D(S_i,\widetilde{S}_1)\right)=(k+1)\boxtimes\left(\Z_2\times(\F_q\backslash \{0\})\right).
\end{equation}
Therefore, by \eqref{eqn_D_0_D_1} and \eqref{eqn_S_1_S_2},
$$\bigcup_{1\leq i\ne j\leq 3}D(S_i,\widetilde{S}_j)=(2k+1)\boxtimes
\left(\Z_2\times(\F_q\backslash \{0\})\right)\subseteq (2k+1)\boxtimes \left((\Z_2\times\F_q)\backslash \{(0,0)\}\right),$$
i.e., $\cS=\{S_1,S_2,S_3\}$ is an $(n=2q,m=3,(2,2k,2k),a=4k+2,\lambda=2k+1)$-BSWEDF.
By Lemma \ref{lemma_bound_BSWEDF}, we have $$\lambda\geq \left\lceil\frac{\widetilde{k}a(m-1)}{n-1}\right\rceil=\left\lceil\frac{2k(4k+2)2}{2q-1}\right\rceil=\left\lceil\frac{2k(8k+1)+6k}{8k+1}\right\rceil=2k+1.$$
Thus, $\cS$ is an optimal $(n=2q,m=3,(2,2k,2k),a=4k+2,\lambda=2k+1)$-BSWEDF.

\qed

It is easily seen from the proof of Theorem \ref{theorem_SWEDF} that the above BSWEDFs are not EDFs, or GSEDFs, or PEDFs.
\begin{example}
Let $n=2q=26$. By Construction A, the family of sets $\cS=\{S_1,S_2,S_3\}$ over $\Z_{26}$
can be listed as
\begin{equation*}
  S_1=\{0,13\},\,\, S_2=\{14,16,22,17,25,23\},\,\, \text{and}\,\, S_3=\{2,6,18,8,24,20\}.
\end{equation*}
It is easy to check that
\begin{equation*}
\bigcup_{1\leq i\ne j\leq 3}D(S_i,\widetilde{S}_j)=7\boxtimes (\Z_{26}\backslash\{0,13\}),
\end{equation*}
which means that $\cS$ is an optimal $(26,3,(2,6,6),14,7)$-BSWEDF.
\end{example}

Let $n_1=2k+1$ and $\{\{0\},E_1,E_2\}$ be an $(n_1,k,k-1)$-PDF over
an Abelian group $G$ of order $n_1$.
Such kinds of PDFs exist, for example, when $n_1$ is a prime power, and $E_1=D^2_0,\,E_2=D^2_1$.
Based on
$\{\{0\},E_1,E_2\}$ we can construct a BSWEDF as follows.

\begin{construction}
Let $\cW=\{W_1,W_2,W_3\}$ be the family of disjoint subsets of $\Z_2\times G$,
defined as  $W_1=\{(1,0)\}$, $W_2=\{0\}\times E_1$, and $W_3=\{0\}\times E_2$.
\end{construction}

\begin{theorem}\label{theorem_cons_B}
The family $\cW=\{W_1,W_2,W_3\}$ generated by Construction B is an
optimal $(n=2n_1,3,(1,k,k),2k+1,k+1)$-BSWEDF.
\end{theorem}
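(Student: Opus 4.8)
The plan is to mimic the structure of the proof of Theorem \ref{theorem_SWEDF}, exploiting the fact that Construction B is built from a PDF rather than the biquadratic residue decomposition, so the combinatorics should be cleaner. First I would record the parameters: here $K=(1,k,k)$, so $\widetilde{k}=\lcm(1,k,k)=k$, and consequently the standard weighted multi-sets are $\widetilde{W}_1=k\boxtimes\{(1,0)\}$, $\widetilde{W}_2=W_2$, and $\widetilde{W}_3=W_3$. The goal is to show that $\bigcup_{1\le i\ne j\le 3}D(W_i,\widetilde{W}_j)\subseteq (k+1)\boxtimes\left((\Z_2\times G)\backslash\{(0,0)\}\right)$ with $k+1$ smallest, and then invoke Lemma \ref{lemma_bound_BSWEDF} to certify optimality.

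I would split the external difference computation into two groups. For the interaction between $W_2$ and $W_3$ (both sitting in the $\{0\}$-coset), the property that $\{\{0\},E_1,E_2\}$ is an $(n_1,k,k-1)$-PDF over $G$ is the engine. In a PDF every nonzero element of $G$ appears exactly $k-1$ times in the union of all internal difference multi-sets, and one needs to extract from this the external contribution $D(E_1,E_2)\cup D(E_2,E_1)$. Since $E_1\cup E_2 = G\backslash\{0\}$ and $\{0\}$ contributes nothing internally, a counting argument (total differences minus the internal differences of $E_1$ and of $E_2$) should yield $D(E_1,E_2)\cup D(E_2,E_1)=(k+1)\boxtimes(G\backslash\{0\})$ or a closely related constant; this lands inside the $\{0\}$-coset of $\Z_2\times G$. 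For the interaction of $W_1=\{(1,0)\}$ with $W_2$ and $W_3$, the differences $D(W_1,\widetilde{W}_2)$, $D(\widetilde{W}_2,W_1)$ etc. translate between cosets, so they all fall in the $\{1\}$-coset; because $\widetilde{W}_1$ carries multiplicity $k$ and $E_1\cup E_2=G\backslash\{0\}$, these should assemble to a constant multiple covering $\{1\}\times(G\backslash\{0\})$.

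The main obstacle I anticipate is the bookkeeping of multiplicities so that the two coset contributions both equal $k+1$ and thereby combine into a uniform $(k+1)\boxtimes\left((\Z_2\times G)\backslash\{(0,0)\}\right)$. In particular, the $W_1$-versus-$\{W_2,W_3\}$ differences mix the single copy coming from $D(W_i,\widetilde{W}_1)$ with the $k$-fold copy coming from $D(W_1,\widetilde{W}_i)$, and one must check these add to exactly $k+1$ on each nonzero element of the $\{1\}$-coset, with nothing spilling onto $(1,0)$ itself. Once both cosets are confirmed to have the uniform count $k+1$, optimality is immediate: Lemma \ref{lemma_bound_BSWEDF} gives the lower bound
\begin{equation*}
\lambda\ge\left\lceil\frac{\widetilde{k}\,a\,(m-1)}{n-1}\right\rceil=\left\lceil\frac{k(2k+1)\cdot 2}{2n_1-1}\right\rceil=\left\lceil\frac{k(2k+1)\cdot 2}{4k+1}\right\rceil,
\end{equation*}
and a short estimate of this ceiling should return exactly $k+1$, matching the value achieved by the construction and completing the proof.
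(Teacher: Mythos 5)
Your plan follows the paper's proof almost step for step: the same splitting of $\bigcup_{1\le i\ne j\le 3}D(W_i,\widetilde{W}_j)$ into the $W_2$--$W_3$ part (living in $\{0\}\times(G\backslash\{0\})$) and the $W_1$-versus-$\{W_2,W_3\}$ part (living in $\{1\}\times(G\backslash\{0\})$), the same use of the PDF property of $\{\{0\},E_1,E_2\}$, and the same appeal to Lemma \ref{lemma_bound_BSWEDF} with $\left\lceil 2k(2k+1)/(4k+1)\right\rceil=k+1$ to certify optimality. So the skeleton is right, and the final ceiling computation is right.

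However, two of your anticipated intermediate values are wrong, and one of them affects what you should be trying to prove. The PDF counting gives $D(E_1,E_2)\cup D(E_2,E_1)=k\boxtimes(G\backslash\{0\})$, not $(k+1)\boxtimes(G\backslash\{0\})$: each nonzero $\delta$ occurs $n_1-2=2k-1$ times in $D(E_1\cup E_2)$ and, by the PDF property, $k-1$ times in $D(E_1)\cup D(E_2)$, leaving exactly $k$ external occurrences. (If you instead take ``total differences'' to mean $D(G)$, you must also subtract the $2\boxtimes(G\backslash\{0\})$ arising from differences with the block $\{0\}$; either way the answer is $k$.) Consequently the two coset contributions are not both $k+1$: the $\{0\}$-coset count is $k$, while the $\{1\}$-coset count is $k+1$ (one copy from $D(W_1,\widetilde{W}_2)\cup D(W_1,\widetilde{W}_3)=\{1\}\times\left(-(E_1\cup E_2)\right)$, plus $k$ copies from $D(W_2,\widetilde{W}_1)\cup D(W_3,\widetilde{W}_1)$), and $(1,0)$ occurs not at all. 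So the union is genuinely non-uniform: Construction B produces a BSWEDF that is not an SWEDF, and the ``main obstacle'' you set yourself --- verifying a uniform $(k+1)\boxtimes\left((\Z_2\times G)\backslash\{(0,0)\}\right)$ --- is impossible to meet. The fix is that uniformity is not needed: Definition \ref{def_BSWEDF} only asks for the smallest $\lambda$ giving containment, containment holds with $\lambda=k+1$, and $k+1$ is smallest because that count is attained on the $\{1\}$-coset. With that correction, your argument becomes exactly the paper's.
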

\begin{proof}
The fact that $\{\{0\},E_1,E_2\}$ is an $(n_1=2k+1,k,k-1)$-PDF means that
$D(E_1,E_2)\cup D(E_2,E_1)=k\boxtimes (G\backslash\{0\})$.
Thus, we have
\begin{equation*}
D(W_2,\widetilde{W}_3)\cup D(W_3,\widetilde{W}_2)=D(W_2,W_3)\cup D(W_3,W_2)=k \boxtimes (\{0\}\times(G\backslash\{0\})),
\end{equation*}
where we apply the fact $\widetilde{k}=\text{lcm}(1,k,k)=k=|W_2|=|W_3|$.
Note that
\begin{equation*}
\begin{split}
&D(W_1,\widetilde{W}_2)\cup D(W_1,\widetilde{W}_3)\cup D(W_3,\widetilde{W}_1)\cup D(W_2,\widetilde{W}_1)\\
=&\{1\}\times (-E_1)\cup \{1\}\times (-E_2)\cup D(\{0\}\times E_1,k\boxtimes \{(1,0)\})\cup D(\{0\}\times E_2,k\boxtimes \{(1,0)\})\\
=&(k+1)\boxtimes (\{1\}\times (G\backslash \{0\})).
\end{split}
\end{equation*}
Based on the above two equalities,
\begin{equation*}
\bigcup_{1\leq i\ne j\leq 3} D(W_i,\widetilde{W}_j)\subseteq (k+1)\boxtimes((\Z_2\times G)\backslash\{(0,0)\} ),
\end{equation*}
i.e., $\cW$ is an $(n=2n_1,m=3,(1,k,k),a=2k+1,\lambda=k+1)$-BSWEDF.

By Lemma \ref{lemma_bound_BSWEDF}, we have $$\lambda\geq \left\lceil\frac{\widetilde{k}a(m-1)}{n-1}\right\rceil=\left\lceil\frac{k(2k+1)2}{2n_1-1}\right\rceil=\left\lceil\frac{k(4k+1)+k}{4k+1}\right\rceil=k+1.$$
Thus, $\cW$ is an optimal $(2n_1=4k+2,3,(1,k,k),2k+1,k+1)$-BSWEDF.

\end{proof}

It is easily seen from the proof of Theorem \ref{theorem_cons_B} that the above BSWEDFs are not EDFs, or GSEDFs, or PEDFs.

\begin{example}
Let $n=2n_1=22$. By Construction B, the family of sets $\cW=\{W_1,W_2,W_3\}$ over $\Z_{22}$
can be listed as
\begin{equation*}
  W_1=\{11\},\,\, W_2=\{12, 4, 16, 20, 14\},\,\, \text{and}\,\, W_3=\{2, 8, 10, 18, 6\}.
\end{equation*}
It is easy to check that
\begin{equation*}
\bigcup_{1\leq i\ne j\leq 3}D(W_i,\widetilde{W}_j)\subseteq 6\boxtimes (Z_{22}\backslash\{0\}),
\end{equation*}
which means that $\cW$ is an optimal $(22,3,(1,5,5),11,6)$-BSWEDF.
\end{example}

\begin{construction}
Let $q=4k+1$ be a prime power and let $\cU=\{U_1,U_2,U_3,U_4\}$ be the family of disjoint subsets of $\Z_3\times\F_q$,
defined as  $U_1=\{(1,0)\}$, $U_2=\{(2,0)\}$, $U_3=\{0\}\times D^2_0$, and
$U_4=\{0\}\times D^2_1$.
\end{construction}

\begin{theorem}\label{theorem_cons_C}
The family $\cU=\{U_1,U_2,U_3,U_4\}$ in Construction C is an
optimal $(3q=12k+3,4,(1,1,2k,2k),4k+2,2k+1)$-BSWEDF.
\end{theorem}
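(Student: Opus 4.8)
The plan is to compute the full difference multiset $\bigcup_{1\le i\ne j\le 4}D(U_i,\widetilde{U}_j)$ directly and show it equals $(2k+1)\boxtimes\big((\Z_3\times\F_q)\backslash\{(0,0)\}\big)$, then invoke Lemma \ref{lemma_bound_BSWEDF} to confirm optimality. First I would record the weights: since $K=(1,1,2k,2k)$, we have $\widetilde{k}=\lcm(1,1,2k,2k)=2k$, so $\widetilde{U}_1=2k\boxtimes\{(1,0)\}$, $\widetilde{U}_2=2k\boxtimes\{(2,0)\}$, while $\widetilde{U}_3=U_3$ and $\widetilde{U}_4=U_4$ are unweighted. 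The group splits as $\Z_3\times\F_q$, so every difference $(a,b)-(a',b')=(a-a',b-b')$ factors through the $\Z_3$-component and the $\F_q$-component separately, and I would organize the computation by which cosets $\{0\}\times\F_q$, $\{1\}\times\F_q$, $\{2\}\times\F_q$ the target element lies in.

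Next I would group the sixteen ordered pairs $(i,j)$ into natural blocks. The block involving only $U_3,U_4$ (the two large sets sitting in $\{0\}\times\F_q$) contributes $D(U_3,U_4)\cup D(U_4,U_3)=D(D^2_0,D^2_1)\cup D(D^2_1,D^2_0)$, which by the $(q,2k,2k-1)$-PDF property (the first displayed identity available from $\{D^2_0,D^2_1\}$ forming a PDF, as used in Lemma \ref{lemma_D_4}) equals $2k\boxtimes(\{0\}\times(\F_q\backslash\{0\}))$. The blocks pairing the singletons $U_1,U_2$ with the large sets $U_3,U_4$ produce differences in the nonzero $\Z_3$-cosets; for instance $D(U_1,\widetilde{U}_3)=\{1\}\times(-D^2_0)$ with multiplicity one, whereas $D(U_3,\widetilde{U}_1)=D(U_3,2k\boxtimes\{(1,0)\})=2k\boxtimes(\{-1\}\times D^2_0)$, and since $-1\equiv 2\pmod 3$ these land in the $\{2\}$-coset; one must carefully track the $\Z_3$-component ($1-0=1$, $0-1=-1=2$, $2-0=2$, $0-2=-2=1$, $1-2=-1=2$, $2-1=1$) to see how the weighted and unweighted contributions distribute across the $\{1\}$- and $\{2\}$-cosets. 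Finally the singleton-singleton block $D(U_1,\widetilde{U}_2)\cup D(U_2,\widetilde{U}_1)$ contributes only to the zero $\F_q$-component, i.e. to $(1,0)$ and $(2,0)$, and I must verify these particular points receive exactly the target multiplicity rather than being over- or under-counted.

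The main obstacle I expect is the bookkeeping for the nonzero $\Z_3$-cosets: the element $(1,0)$ and $(2,0)$ (the nonzero elements with zero $\F_q$-part) and the elements $(1,x),(2,x)$ with $x\ne 0$ all receive contributions from several different blocks, some weighted by $2k$ and some unweighted, and the claim that they sum to the \emph{same} constant $2k+1$ across all of $(\Z_3\times\F_q)\backslash\{(0,0)\}$ is a genuine coincidence of the construction that must be checked coset by coset. The key enabling fact is $D^2_0\cup D^2_1=\F_q\backslash\{0\}$ together with $-D^2_0=D^2_0$, $-D^2_1=D^2_1$ (valid since $k$ is whatever parity makes $\alpha^{2k}=-1$ fix the classes setwise); this collapses the single-multiplicity contributions from the singletons into clean $1\boxtimes(\{1\}\times(\F_q\backslash\{0\}))$-type terms and lets the weighted $2k$-contributions fill in to reach $2k+1$.

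Once the multiset identity
\begin{equation*}
\bigcup_{1\le i\ne j\le 4}D(U_i,\widetilde{U}_j)=(2k+1)\boxtimes\big((\Z_3\times\F_q)\backslash\{(0,0)\}\big)
\end{equation*}
is established, optimality follows immediately: with $n=3q=12k+3$, $a=4k+2$, $m=4$, and $\widetilde{k}=2k$, Lemma \ref{lemma_bound_BSWEDF} gives the lower bound $\lambda\ge\lceil\frac{2k(4k+2)\cdot3}{12k+2}\rceil=\lceil\frac{2k(6k+3)}{6k+1}\rceil$, which I would simplify to $2k+1$, matching the value realized by the construction. Hence $\cU$ is an optimal $(12k+3,4,(1,1,2k,2k),4k+2,2k+1)$-BSWEDF.
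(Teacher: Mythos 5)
Your block decomposition (the $U_3$--$U_4$ block, the two singleton-versus-large-set blocks, the singleton--singleton block, and the values of $\widetilde{k}$ and $\widetilde{U}_i$) is exactly the paper's, but the identity you set out to prove is false: the multiset $\bigcup_{1\le i\ne j\le 4}D(U_i,\widetilde{U}_j)$ is \emph{not} equal to $(2k+1)\boxtimes\bigl((\Z_3\times\F_q)\backslash\{(0,0)\}\bigr)$. A counting argument rules this out immediately: the multiset has exactly $\widetilde{k}a(m-1)=2k(4k+2)\cdot 3=24k^2+12k$ elements counted with multiplicity, whereas constant multiplicity $2k+1$ over the $12k+2$ nonzero group elements would require $(2k+1)(12k+2)=24k^2+16k+2$. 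Carrying out your own blocks confirms where the deficit of $4k+2$ sits: the $U_3,U_4$ block gives $2k\boxtimes\bigl(\{0\}\times(\F_q\backslash\{0\})\bigr)$; the two blocks pairing $U_1$ (resp.\ $U_2$) against $U_3,U_4$ combine, via $-D^2_i=D^2_i$ and $D^2_0\cup D^2_1=\F_q\backslash\{0\}$, to $(2k+1)\boxtimes\bigl(\{1,2\}\times(\F_q\backslash\{0\})\bigr)$; and the singleton--singleton block gives $2k\boxtimes\{(1,0),(2,0)\}$. So the $4k+2$ elements of $\bigl(\{0\}\times(\F_q\backslash\{0\})\bigr)\cup\{(1,0),(2,0)\}$ each occur only $2k$ times, while the elements of $\{1,2\}\times(\F_q\backslash\{0\})$ occur $2k+1$ times. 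The ``genuine coincidence'' you anticipated (one constant across all cosets) does not occur, and cannot: $\cU$ is a BSWEDF but not an SWEDF, consistent with the divisibility condition $(n-1)\mid\widetilde{k}a(m-1)$ of Lemma \ref{lemma_bound_BSWEDF} failing here.

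The theorem survives because Definition \ref{def_BSWEDF} only asks for containment: $\lambda$ is the \emph{smallest} integer with $\bigcup_{i\ne j}D(U_i,\widetilde{U}_j)\subseteq\lambda\boxtimes\bigl((\Z_3\times\F_q)\backslash\{(0,0)\}\bigr)$, i.e.\ $\lambda$ equals the maximum multiplicity. From the distribution above that maximum is $2k+1$, attained on $\{1,2\}\times(\F_q\backslash\{0\})$, so $\lambda=2k+1$ exactly. Your closing step is then correct as written: Lemma \ref{lemma_bound_BSWEDF} gives $\lambda\ge\lceil 2k+\tfrac{4k}{6k+1}\rceil=2k+1$, whence optimality. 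So the repair is to replace your target equality by the two facts ``the multiset is contained in $(2k+1)\boxtimes(G\backslash\{0\})$'' and ``multiplicity $2k+1$ is attained,'' both of which fall out of the very block computation you describe; this is precisely what the paper's proof does. (A minor side remark: your hedging about the parity of $k$ is unnecessary for this construction --- $-1=\alpha^{2k}$ is always an even power of $\alpha$ when $q\equiv 1\pmod 4$, so $-D^2_i=D^2_i$ holds for every $k$, and only the $D^4_i$ identity of Lemma \ref{lemma_D_4}, which Construction C never uses, needs $k$ odd.)
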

\begin{proof}
Note that $\widetilde{k}=\text{lcm}(1,1,2k,2k)=2k$, which implies $\widetilde{U}_3=U_3$
and $\widetilde{U}_4=U_4$.
Lemma \ref{lemma_D_4} shows that
$D(D^{2}_0,D^{2}_1)\cup D(D^{2}_1,D^{2}_0)=2k\boxtimes (\F_q\backslash\{0\})$.
Thus, we have
\begin{equation*}
D(U_3,\widetilde{U}_4)\cup D(U_4,\widetilde{U}_3)=D(U_3,U_4)\cup D(U_3,U_4)=2k \boxtimes (\{0\}\times(\F_q\backslash\{0\})).
\end{equation*}

Recall that
\begin{equation*}
\begin{split}
&D(U_1,\widetilde{U}_3)\cup D(U_1,\widetilde{U}_4)\cup D(U_3,\widetilde{U}_1)\cup D(U_4,\widetilde{U}_1)\\
=&(\{1\}\times D^2_0)\cup (\{1\}\times D^2_1)\cup D(\{0\}\times D^2_0,2k\boxtimes \{(1,0)\})\cup D(\{0\}\times D^2_1,2k\boxtimes \{(1,0)\})\\
=&(\{1\}\times (\F_q\backslash \{0\}))\cup 2k\boxtimes (\{2\}\times (\F_q\backslash \{0\}))
\end{split}
\end{equation*}
and
\begin{equation*}
\begin{split}
&D(U_2,\widetilde{U}_3)\cup D(U_2,\widetilde{U}_4)\cup D(U_3,\widetilde{U}_2)\cup D(U_4,\widetilde{U}_2)\\
=&\{2\}\times D^2_0\cup \{2\}\times D^2_1\cup D(\{0\}\times D^2_0,2k\boxtimes \{(2,0)\})\cup D(\{0\}\times D^2_1,2k\boxtimes \{(2,0)\})\\
=&(\{2\}\times (\F_q\backslash \{0\}))\cup 2k\boxtimes (\{1\}\times (\F_q\backslash \{0\})).
\end{split}
\end{equation*}
For the differences between $U_1$ and $U_2$, we have
$$D(U_1,\widetilde{U}_2)\cup D(U_2,\widetilde{U}_1)=2k\boxtimes\{(1,0),(2,0)\}.$$

Therefore, the above four equalities mean that
\begin{equation*}
\begin{split}
&\bigcup_{1\leq i\ne j\leq 4} D(U_i,\widetilde{U}_j)\\
=&(2k\boxtimes\{(1,0),(2,0)\})\cup (2k\boxtimes \{0\}\times (\F_q\backslash\{0\}))\cup ((2k+1)\boxtimes \{1,2\}\times (\F_q\backslash\{0\}))\\
\subseteq& (2k+1)\boxtimes((\Z_3\times\F_q)\backslash\{(0,0)\} ),
\end{split}
\end{equation*}
i.e., $\cU$ is an $(n=3q,m=4,(1,1,2k,2k),a=4k+2,\lambda=2k+1)$-BSWEDF.

By Lemma \ref{lemma_bound_BSWEDF}, we have $$\lambda\geq \left\lceil\frac{\widetilde{k}a(m-1)}{n-1}\right\rceil=\left\lceil\frac{2k(4k+2)3}{3q-1}\right\rceil=\left\lceil\frac{2k(12k+2)+8k}{12k+2}\right\rceil=2k+1.$$
Thus, $\cU$ is an optimal $(3q,4,(1,1,2k,2k),4k+2,2k+1)$-BSWEDF.

\end{proof}

It is easily seen from the proof of Theorem \ref{theorem_cons_C} that the above BSWEDFs are not EDFs, or GSEDFs, or PEDFs.

\begin{example}
Let $n=3q=39$. By Construction A, the family of sets $\cU=\{U_1,U_2,U_3,U_4\}$ over $\Z_{39}$
can be listed as
\begin{equation*}
  U_1=\{13\},\,\, U_2=\{26\},\,\, U_3=\{27, 30, 3, 12, 9, 36\},\,\,\text{and}\,\, U_4=\{15, 21, 6, 24, 18, 33\}.
\end{equation*}
It is easy to check that
\begin{equation*}
\bigcup_{1\leq i\ne j\leq 4}D(U_i,\widetilde{U}_j)\subseteq 7\boxtimes (\Z_{39}\backslash\{0\}),
\end{equation*}
which means that $\cU$ is an optimal $(39,4,(1,1,6,6),14,7)$-BSWEDF.
\end{example}

\subsection{A construction of cyclic SWEDFs}
In this subsection, we are going to construct cyclic SWEDFs, which are not
regular EDFs, or GSEDFs, or PEDFs. A \textit{cyclic} SWEDF means an SWEDF over a cyclic additive group.

A well-studied kind of PDFs $\cR=\{R_1,R_2,\cdots,R_l\}$ are those
with parameters $(n=(k-1)(tk+1),(k,\cdots,k,k-1),k-1)$
over $\Z_{n}=\Z_{k-1}\times\Z_{tk+1}$ where ${\rm gcd}(k-1,tk+1)=1$,
$R_l=\Z_{k-1}\times\{0\}$ and $l=t(k-1)+1$.
In Table \ref{tab PDF}, we list such PDFs which can be applied in the following
construction.

\begin{table}
\centering
\begin{threeparttable}[b]
\caption{Some known PDFs with parameters $(n,\mathcal{W}=(k^{\frac{n-k+1}{k}},(k-1)^1),k-1)$\label{tab PDF}} \center
 \begin{tabular}{|c|c|c|}
\hline
Parameters & Constraints  &Ref.\\
\hline
\hline $\left(2v,\,(3^{\frac{2v-2}{3}},2^1),\,2 \right)$, & $\begin{array}{c}v=p_1^{m_1}p_2^{m_2}\cdots p_r^{m_r},\,2<p_1<p_2<\cdots<p_r,\\
{\rm and }\,\,3|(p_{t}-1)\,{\rm for }\, 1\leq t\leq r\end{array}$&{\cite{BYW2010}}\\
\hline $\left(sv,\,((s+1)^\frac{sv-s}{s+1},s^1),\,s \right)$& $\begin{array}{c}v=p_1^{m_1}p_2^{m_2}\cdots p_r^{m_r},\,2<p_1<p_2<\cdots<p_r,\\
{\rm and }\,\,2(s+1)|(p_{t}-1)\,{\rm for }\, 1\leq t\leq r, s=4,5 \end{array}$&{\cite{BYW2010}}\\
\hline $\left(6v,\,(7^\frac{6v-6}{7},6^1),\,6 \right)$& $\begin{array}{c}v=p_1^{m_1}p_2^{m_2}\cdots p_r^{m_r},\,2<p_1<p_2<\cdots<p_r,\\
{\rm and }\,\,28|(p_{t}-1)\,{\rm for }\, 1\leq t\leq r \end{array}$&{\cite{BYW2010}}\\
\hline $\left(7v,\,(8^\frac{7v-7}{8},7^1),\,7 \right)$& $\begin{array}{c}v=p_1^{m_1}p_2^{m_2}\cdots p_r^{m_r},\,2<p_1<p_2<\cdots<p_r,\\
{\rm and }\,\,8|(p_{t}-1)\,{\rm for }\, 1\leq t\leq r, v\not\in \{17, 89\} \end{array}$&{\cite{BYW2010}}\\
\hline $(q-1,(\frac{q}{d}^{d-1},(\frac{q}{d}-1)^1),{q-d\over d})$&$d|q$, $\text{gcd}(\frac{q}{d}-1,(q-1)/(\frac{q}{d}-1))=1$&{\cite{D2008}}\\
\hline
\end{tabular}
\begin{tablenotes}
\item[] {Herein $p_i$'s are primes; $t$, $s$, $r$ and $m$ are positive integers; $q$ is a prime power.}
\end{tablenotes}
\end{threeparttable}
\end{table}

\begin{construction}\label{cons_SWEDF}
Let $\cV=\{V_1,V_2,\cdots,V_{t(k-1)+k-2}\}$ be the  family of disjoint subsets of $\Z_n$,
defined as
  $$V_i=R_i \quad \text{for }1\leq i\leq t(k-1),$$
  \begin{equation*}
  V_{t(k-1)+j}=\{(j,0)\}\text{ for }1\leq j\leq k-2.
  \end{equation*}
\end{construction}

\begin{theorem}\label{theorem_cons_SWEDF}
Let $\cV$ be the family in Construction D. Then $\cV$
is a cyclic  $(n,t(k-1)+k-2,K=(k,\cdots,k,1,1,\cdots,1),n-1,(t+1)k^2-(t+3)k)$-SWEDF,
where the element $1$ appears $k-2$ times and the element $k$ appears $t(k-1)$
times in $K$.
\end{theorem}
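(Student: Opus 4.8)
The plan is to compute the full external-difference multiset
$$M:=\bigcup_{1\le i\ne j\le m}D(V_i,\widetilde{V}_j)$$
directly and show that every nonzero element of $\Z_n$ occurs in it the same number of times, namely $(t+1)k^2-(t+3)k$. First I would record the parameters: since the nonzero block sizes are $k$ and $1$, we have $\widetilde{k}=\lcm(k,1)=k$, so $\widetilde{V}_i=V_i$ for the $t(k-1)$ blocks of size $k$ while $\widetilde{V}_{t(k-1)+j}=k\boxtimes\{(j,0)\}$ for the $k-2$ singletons; moreover $m=t(k-1)+k-2$ and $a=t(k-1)k+(k-2)=(k-1)(tk+1)-1=n-1$. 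The key structural observation is that, because $\cR$ partitions $\Z_n$ with $R_l=\Z_{k-1}\times\{0\}$, the family $\cV$ is a partition of $U:=\Z_n\setminus\{(0,0)\}$: the big blocks cover $\Z_n\setminus(\Z_{k-1}\times\{0\})$ and the singletons cover the nonzero elements $T:=(\Z_{k-1}\setminus\{0\})\times\{0\}$ of the subgroup, leaving out only $(0,0)$. Writing the weight on a singleton as $k=1+(k-1)$, I would split $M=M_0+(k-1)M_1$, where $M_0=\bigcup_{i\ne j}D(V_i,V_j)$ is the unweighted external-difference multiset of $\cV$ and $M_1=\bigcup_{i\ne j,\;j\text{ a singleton}}D(V_i,V_j)$ collects the extra weight carried by the singletons.

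For $M_0$ I would use the global identity $\bigcup_{i,j}D(V_i,V_j)=D(U,U)$. Since $|U|=n-1$, a direct count gives $D(U,U)=(n-1)\boxtimes\{(0,0)\}\cup(n-2)\boxtimes(\Z_n\setminus\{(0,0)\})$. Subtracting the diagonal $\bigcup_i D(V_i,V_i)=a\boxtimes\{(0,0)\}\cup\bigcup_{i=1}^{t(k-1)}D(R_i)$ then expresses $M_0$ on the nonzero elements as $n-2$ minus the internal-difference count of the big blocks. To evaluate the latter I would use that $\cR$ is a $(k-1)$-PDF, so $\bigcup_{i=1}^{l}D(R_i)=(k-1)\boxtimes(\Z_n\setminus\{(0,0)\})$, together with the fact that $S_0=\Z_{k-1}\times\{0\}$ is a subgroup and hence $D(S_0)=(k-1)\boxtimes T$; removing $S_0$ gives $\bigcup_{i=1}^{t(k-1)}D(R_i)=(k-1)\boxtimes\big((\Z_n\setminus\{(0,0)\})\setminus T\big)$. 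Consequently each $d\in T$ appears $n-2$ times in $M_0$, while each nonzero $d\notin T$ appears $n-k-1$ times.

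For $M_1$ I would argue by translation: for a singleton $\{s\}$ with $s\in T$, the differences $\bigcup_{i:\,V_i\ne\{s\}}D(V_i,\{s\})$ run over $\{v-s:v\in U\setminus\{s\}\}=\Z_n\setminus\{(0,0),-s\}$. Summing over the $k-2$ singletons and using that negation permutes the nonzero subgroup elements (so $-T=T$ and each element of $T$ arises exactly once as $-s$), I get that in $M_1$ each $d\in T$ occurs $k-3$ times and each nonzero $d\notin T$ occurs $k-2$ times. Combining, $M=M_0+(k-1)M_1$ assigns to $d\in T$ the count $(n-2)+(k-1)(k-3)$ and to nonzero $d\notin T$ the count $(n-k-1)+(k-1)(k-2)$; a short simplification shows both equal $(t+1)k^2-(t+3)k$, so $M=\lambda\boxtimes(\Z_n\setminus\{(0,0)\})$ with $\lambda=(t+1)k^2-(t+3)k$. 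Since the count is constant over all nonzero elements, $\cV$ is an SWEDF, and this $\lambda$ automatically equals $\widetilde{k}a(m-1)/(n-1)$ as guaranteed by Lemma \ref{lemma_bound_BSWEDF}.

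The main obstacle is the careful multiset bookkeeping across the two difference classes $T$ and $\Z_n\setminus(T\cup\{(0,0)\})$: everything hinges on correctly subtracting the subgroup internal differences $D(S_0)$ from the PDF's internal differences, and on the cancellation $(n-2)+(k-1)(k-3)=(n-k-1)+(k-1)(k-2)$ that makes the count independent of whether $d$ lies in $T$. The splitting $M=M_0+(k-1)M_1$ is what isolates the effect of the singleton weights and keeps this bookkeeping manageable.
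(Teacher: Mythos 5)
Your proof is correct, and it takes a genuinely different route from the paper's. The paper decomposes the weighted difference multiset by block type: it computes the external differences among the big blocks as the external differences of the full PDF $\cR$ (which give every nonzero element $n-k+1$ times) minus the contribution $(2k-2)\boxtimes(\Z_{k-1}\times(\Z_{tk+1}\setminus\{0\}))$ of pairs involving $R_l$, and then adds the weighted big-to-singleton contribution $(k+1)(k-2)\boxtimes(\Z_{k-1}\times(\Z_{tk+1}\setminus\{0\}))$ and the weighted singleton-to-singleton contribution $k(k-3)\boxtimes((\Z_{k-1}\setminus\{0\})\times\{0\})$. You instead decompose by weight, $M=M_0+(k-1)M_1$, and compute the unweighted multiset $M_0$ by complementary counting from $D(U,U)$, using the PDF hypothesis in its internal-difference form (together with the subgroup identity $D(\Z_{k-1}\times\{0\})=(k-1)\boxtimes T$) rather than in its external-difference form; since external equals all-pairs minus internal, the two uses of the hypothesis are complementary and the arithmetic content is the same. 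What your organization buys is that the singletons' extra weight is isolated in $M_1$ (computed by a clean translation argument giving counts $k-3$ on $T$ and $k-2$ off $T$) and the dependence on whether $d\in T$ is confined to a single final cancellation, $(n-2)+(k-1)(k-3)=(n-k-1)+(k-1)(k-2)$; the paper's version instead keeps each of its three pieces flat on the two classes throughout, which makes every intermediate multiset explicit. Both arguments rest on the same two structural facts — $\cR$ is a $(k-1)$-PDF and $R_l$ is the subgroup $\Z_{k-1}\times\{0\}$ whose nonzero elements are exactly the singletons — and all of your intermediate counts ($n-2$ and $n-k-1$ for $M_0$; $k-3$ and $k-2$ for $M_1$) are consistent with the paper's, yielding the same $\lambda=(t+1)k^2-(t+3)k$.
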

\begin{proof}
Since $\cR$ is an $(n=(k-1)(tk+1),(k,\cdots,k,k-1),k-1)$ PDF, we can conclude that
\begin{equation*}
\bigcup_{1\leq i\ne j\leq l} D(R_i,R_j)=(n-k+1)\boxtimes ((\Z_{k-1}\times \Z_{tk+1})\backslash\{(0,0)\}).
\end{equation*}
Recall that $R_l=\Z_{k-1}\times\{0\}$, which means
\begin{equation*}
\bigcup_{1\leq i\leq l-1}(D(R_i,R_l)\cup D(R_l,R_i))=(2k-2)\boxtimes (\Z_{k-1}\times(\Z_{tk+1}\backslash\{0\})).
\end{equation*}
Thus, by Construction D, we have
\begin{equation}\label{eqn_V_diff_1}
\begin{split}
&\bigcup_{1\leq i\ne j\leq l-1}D(V_i,\widetilde{V}_j)=\bigcup_{1\leq i\ne j\leq l-1}D(V_i,V_j)=
\bigcup_{1\leq i\ne j\leq l-1}D(R_i,R_j)\\
=&\left(\bigcup_{1\leq i\ne j\leq l} D(R_i,R_j)\right)\backslash \left(\bigcup_{1\leq i\leq l-1}(D(R_i,R_l)\cup D(R_l,R_i))\right)\\
=&\left((n-k+1)\boxtimes((\Z_{k-1}\backslash\{0\})\times \{0\})\right)\cup \left((n-3k+3)\boxtimes (\Z_{k-1}\times(\Z_{tk+1}\backslash\{0\}))\right),
\end{split}
\end{equation}
where we use the fact $\widetilde{k}=k$.

Note that for any $1\leq j\leq k-2$,
\begin{equation*}
\begin{split}
&\bigcup_{1\leq i\leq l-1}(D(V_i,\widetilde{V}_{l-1+j})\cup D(V_{l-1+j},\widetilde{V}_i))\\
=&\bigcup_{1\leq i\leq l-1}(D(R_i,k\boxtimes \{(j,0)\})\cup D(\{(j,0)\},R_i))=
(k+1)\boxtimes (\Z_{k-1}\times(\Z_{tk+1}\backslash\{0\})).
\end{split}
\end{equation*}
Thus, we have
\begin{equation}\label{eqn_V_diff_2}
\bigcup_{1\leq j\leq k-2}\bigcup_{1\leq i\leq l-1}(D(V_i,\widetilde{V}_{l-1+j})\cup D(V_{l-1+j},\widetilde{V}_i))=
((k+1)(k-2))\boxtimes (\Z_{k-1}\times(\Z_{tk+1}\backslash\{0\})).
\end{equation}
For the last part of external differences, we have
\begin{equation}\label{eqn_V_diff_3}
\begin{split}
\bigcup_{1\leq i\ne j\leq k-2}D(V_{l-1+i},\widetilde{V}_{l-1+j})
=&\bigcup_{1\leq i\ne j\leq k-2}D(\{(i,0)\},k\boxtimes\{(j,0)\})\\
=&k\boxtimes\left(\bigcup_{1\leq i\ne j\leq k-2}D(\{(i,0)\},\{(j,0)\})\right)\\
=&k(k-3)\boxtimes ((\Z_{k-1}\backslash\{0\})\times \{0\}).
\end{split}
\end{equation}
Combining \eqref{eqn_V_diff_1}, \eqref{eqn_V_diff_2} and \eqref{eqn_V_diff_3},
\begin{equation*}
\begin{split}
&\bigcup_{1\leq i\ne j\leq l+k-3}D(V_i,\widetilde{V}_j)\\
=&\left(\bigcup_{1\leq i\ne j\leq l-1}D(V_i,\widetilde{V}_j)\right)\cup
\left(\bigcup_{1\leq j\leq k-2}\bigcup_{1\leq i\leq l-1}(D(V_i,\widetilde{V}_{l-1+j})\cup D(V_{l-1+j},\widetilde{V}_i))\right)\cup
\left(\bigcup_{1\leq i\ne j\leq k-2}D(V_{l-1+i},\widetilde{V}_{l-1+j})\right)\\
=&\left((n-k+1+k(k-3))\boxtimes (\Z_{k-1}\backslash\{0\})\times \{0\}\right)\cup
\left((n-3k+3+(k+1)(k-2))\boxtimes (\Z_{k-1}\times(\Z_{tk+1}\backslash\{0\}))\right)\\
=&((t+1)k^2-tk-3k)\boxtimes ((\Z_{k-1}\times \Z_{tk+1})\backslash\{(0,0)\}),
\end{split}
\end{equation*}
where $n=(k-1)(tk+1)$.

Therefore, $\cV$ is a cyclic $(n,t(k-1)+k-2,(k,k,\cdots,k,1,1,\cdots,1),n-1,(t+1)k^2-(t+3)k)$-SWEDF,
where the element $1$ occurs $k-2$ times in $K$ and the element $k$ appears $t(k-1)$
times in $K$. This completes the proof.

\end{proof}

It is easily seen from the proof of Theorem \ref{theorem_cons_SWEDF} that the above SWEDFs are not regular EDFs, or GSEDFs, or PEDFs.

In \cite{HP}, Huczynska and Paterson introduced some constructions of SWEDFs (or equivalently, RWSEDs)
with the so-called bimodal property.

\begin{definition}[\cite{HP}]\label{def_bimodal}
Let $G$ be a finite Abelian group and $\cB$ be a collection $B_1, B_2,\dots, B_m$
of disjoint subsets of $G$ with sizes $k_1, k_2,\dots,k_m$, respectively.
We say that $\cB$ has the \textit{bimodal property} if for each $\delta\in G\backslash\{0\}$
we have $N_i(\delta)\in\{0, k_i\}$ for $1\leq i\leq m$, where
$N_i(\delta)$
is defined in Definition \ref{def_RWEDF}.
\end{definition}

The SWEDF generated by Construction D does not have the  bimodal property.
Let $\cV$ be the SWEDF generated by Construction D. For any $v\in V_i$ with
$|V_i|=k$,
we have $0\in D(V_i,\{v\})$ and $|D(V_i,\{v\})|=|V_i|=k$. However, by Construction D,
$0$ is not an element of $V_j$ for $1\leq j\leq l+k-3$. Thus, the number
of solutions for $a-b=v$ for $a\in V_i$ and $b\in V_j$ for $1\leq j\leq l+k-3$
and $j\ne i$ is at most $k-1$, since $\bigcup_{1\leq j\leq l+k-3}V_j=\Z_n\backslash\{0\}$, i.e.,
$N_i(v)\leq k-1$.
Next, we show that there exists $V_i$ with $|V_i|=k$ satisfying $N_i(v)\ne 0$.
If $a-b\ne v$ for all $a\in V_i$ and $b\in V_j$ for $1\leq j\leq l+k-3$
and $j\ne i$,  then $a\in V_i$ means that $(a+\langle v\rangle)\backslash\{0\}\subseteq V_i$.
This is to say that $V_i$ is the union of some cosets of $\langle v\rangle$ besides the
element $0$ and $k=\tau|\langle v\rangle|-1$ for some integer $\tau\geq 1$.
This is impossible since there are elements $v$ with
$|\langle v\rangle|>k+1$ in $\Z_n\backslash\{0\}$. Thus, the SWEDF generated by
Construction D is not bimodal. For more details about SWEDFs (or equivalently, RWEDFs) with bimodal property
the reader may refer to \cite{HP,HP2019}.

Compared with the constructions in \cite{HP}, Construction \ref{cons_SWEDF}
can generate RWEDFs with flexible parameters without bimodal property. To the
best of our knowledge, this is the first class of RWEDFs without the bimodal property,
which are not regular EDFs, or GSEDFs, or PEDFs.

\begin{corollary}
Let $\cV$ be the family in Construction D. Then $\cV$
is an $(n,t(k-1)+k-2,K=(k,\cdots,k,1,1,\cdots,1),n-1,(t+1)k-t-3)$-RWEDF without
the bimodal property,
where the element $1$ appears $k-2$ times and the element $k$ appears $t(k-1)$
times in $K$.
\end{corollary}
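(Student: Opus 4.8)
The plan is to obtain this corollary as a direct reformulation of Theorem~\ref{theorem_cons_SWEDF} through the dictionary between SWEDFs and RWEDFs. Recall from the discussion following Theorem~\ref{theorem_AMD_RWEDF} that an $(n,m,K,a,\lambda)$-SWEDF is precisely an $(n,m,K,a,d)$-RWEDF with $d=\lambda/\widetilde{k}$. Since Theorem~\ref{theorem_cons_SWEDF} already certifies that $\cV$ is a cyclic $(n,t(k-1)+k-2,K,n-1,\lambda)$-SWEDF with $\lambda=(t+1)k^2-(t+3)k$, the only substantive task is to translate this $\lambda$ into the corresponding RWEDF parameter $d$.

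First I would record that the size-distribution of $\cV$ consists of $t(k-1)$ blocks of size $k$ and $k-2$ singletons, so that $\widetilde{k}=\lcm(k,\dots,k,1,\dots,1)=k$. Substituting into $d=\lambda/\widetilde{k}$ then yields
\begin{equation*}
d=\frac{(t+1)k^2-(t+3)k}{k}=(t+1)k-(t+3)=(t+1)k-t-3,
\end{equation*}
which is exactly the claimed parameter. Hence $\cV$ is an $(n,t(k-1)+k-2,K,n-1,(t+1)k-t-3)$-RWEDF, with the same size-distribution $K$ (the value $1$ occurring $k-2$ times and the value $k$ occurring $t(k-1)$ times).

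For the failure of bimodality I would simply invoke the argument given in the paragraph preceding this corollary: for a block $V_i$ with $|V_i|=k$ one shows $N_i(v)\le k-1$ for every $v\in V_i$ (because $0\notin V_j$ for all $j$), and one further exhibits such a $V_i$ and $v$ with $N_i(v)\ne 0$ (otherwise $V_i\cup\{0\}$ would be a union of cyclic cosets, forcing $k+1\mid|\langle v\rangle|$ for all such $v$, which fails for elements of large order in $\Z_n\setminus\{0\}$). Thus $N_i(v)\notin\{0,k_i\}$ for that choice, contradicting Definition~\ref{def_bimodal}. I expect no genuine obstacle in this corollary: all of the combinatorial content lives in Theorem~\ref{theorem_cons_SWEDF} and in the non-bimodality analysis already carried out, so the corollary is purely a restatement through the SWEDF--RWEDF equivalence together with the elementary computation $\widetilde{k}=k$.
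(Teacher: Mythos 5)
Your proposal is correct and takes exactly the route the paper intends: the paper states this corollary without a separate proof, precisely because it follows from Theorem \ref{theorem_cons_SWEDF}, the SWEDF--RWEDF equivalence $d=\lambda/\widetilde{k}$ noted after Theorem \ref{theorem_AMD_RWEDF}, the computation $\widetilde{k}=\lcm(k,\dots,k,1,\dots,1)=k$ giving $d=\bigl((t+1)k^2-(t+3)k\bigr)/k=(t+1)k-t-3$, and the non-bimodality argument in the paragraph immediately preceding the corollary. One small slip in your parenthetical restatement of that argument: the condition forced by $N_i(v)=0$ is $k=\tau|\langle v\rangle|-1$, i.e.\ $|\langle v\rangle|\mid(k+1)$, not $k+1\mid|\langle v\rangle|$; it is the former that is contradicted by elements of $\Z_n\backslash\{0\}$ of order exceeding $k+1$, though since you invoke the paper's argument by reference this does not affect the validity of your proof.
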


\begin{example}
Let $G=(\Z_{15},+)$ and $\cR=\{R_1=\{6,9,2,8\},R_2=\{11,14,7,13\},R_3=\{1,4,12,3\}, R_4=\{0,5,10\}\}$.
It is easy to check
that $\cR$ is a PDF with parameters $(15,(4,4,4,3),3)$. By Construction D, we generate a family of subsets of $\Z_{15}$
as
 $\cV=\{V_1=\{6,9,2,8\},V_2=\{11,14,7,13\},V_3=\{1,4,12,3\},V_4=\{5\},V_5=\{10\}\}$. It is easy to check
 that
 $$\bigcup_{1\leq i\ne j\leq 5}D(V_i,\widetilde{V}_j)=16\boxtimes (\Z_{15}\backslash\{0\}),$$
 i.e., $\cV$ is a $(15,5,(4,4,4,1,1),14,16)$-SWEDF (or $(15,5,(4,4,4,1,1),14,4)$-RWEDF).
 Note that $N_3(6)=3\not\in\{0,4\}$,
which means the SWEDF does not have the bimodal property by Definition \ref{def_bimodal}.
\end{example}

\section{Concluding Remarks}\label{sec-conclusion}
In this paper, we first characterized weak algebraic manipulation detection
codes via bounded standard weighted external difference families (BSWEDFs). As
a byproduct, we improved the known lower bound for weak algebraic manipulation
detection codes. To generate optimal weak AMD codes, constructions for BSWEDFs, especially, a
construction of SWEDFs without the bimodal property, were introduced.

Combinatorial structures, e.g., BSWEDFs, SWEDFs, strong external difference
families (SEDFs), partitioned external difference families (PEDFs), play a key
role in the constructions of weak algebraic manipulation detection (AMD) codes.
There are some known results for the existence of SEDFs. However, the
existence of BSWEDFs, SWEDFs, and PEDFs are generally open.
Finding more explicit constructions for such combinatorial structures are not only
an interesting subject for AMD codes but also an interesting problem in their
own right, which is left for future research.

\section*{acknowledgements}
The authors would like to thank Prof. Marco Buratti for the helpful discussion
about difference families. This research is supported by JSPS
Grant-in-Aid for Scientific Research (B) under Grant No. 18H01133.

\end{document}